\documentclass{article}
\usepackage[utf8]{inputenc}
\usepackage[main=english]{babel}
\usepackage{mathtools,amssymb,amsthm,upgreek,dsfont,mathabx}
\usepackage{geometry}
\usepackage{enumitem}
\usepackage{amsmath}
\usepackage{url}
\usepackage{fancybox}
\usepackage{fullpage}
\usepackage{natbib}
\usepackage[colorlinks, linktocpage, citecolor=blue, linkcolor=blue]{hyperref}
\usepackage{multicol} 
\usepackage{array}
\usepackage{xargs}
\usepackage{float}
\usepackage{diagbox}
\usepackage{xcolor}
\usepackage{graphicx}
\usepackage[prependcaption]{todonotes}
\usepackage[capitalise]{cleveref}
\usepackage{soul}
\usepackage{tikz}
\usetikzlibrary{automata, positioning, arrows.meta}

\newcommandx{\Max}[2][1=]{\todo[inline, author={Maxime}, linecolor=green,backgroundcolor=green!25,bordercolor=green,#1]{#2}}

\newtheorem{theorem}{Theorem}[section]

\newtheorem{prop}{Proposition}[section]
\newtheorem{lem}{Lemma}[section]

\newtheorem{rem}{Remark}[section]
\renewcommand{\P}{\mathbb{P}}

\newcommand{\E}{\mathbb{E}}
\newcommand{\ind}{\mathds{1}}

\title{Multi-state model with temporal-consistent survival analysis for homogeneous Markov chains}

\author{
{\large Mikael Escobar-Bach}\footnote{Email: \texttt{mikael.escobar-bach@univ-angers.fr}.  \vspace*{.1cm}} \\ \textit{LAREMA, Université d'Angers} \\[.2cm]
{\large Alexandre Popier}\footnote{Email: \texttt{alexandre.popier@univ-lemans.fr; }.  \vspace*{.1cm}} \\ \textit{Laboratoire Manceau de Mathématiques, Le Mans Université}\\[.2cm]
{\large Malo Sahin}\footnote{Email: \texttt{malo.sahin@univ-angers.fr}.  \vspace*{.1cm}} \\ \textit{LAREMA, Université d'Angers}
}

\begin{document}

\maketitle

\begin{abstract}
In this study, we consider sequences drawn from time-homogeneous Markov chains and introduce a novel approach for estimating first hitting-time distributions to specified terminal states. Our methodology is based on the temporal-consistent survival analysis that facilitates the construction of consistent estimators of the distributions from any estimates of the transition rate and transition probabilities. In this line of work, we also discuss the issue of cured individuals with chains that never reach a terminal state, and propose an estimator of the cure rate. Furthermore, we derive non-asymptotic theoretical guarantees for our approach and apply our methodology with kernel type estimators. The latter approach is illustrated in a simulation study using generic data and a real-life application involving patients undergoing bone marrow transplants.
\end{abstract}

\textit{Some key words :} Survival analysis, multi-state model, first hitting-time model, kernel estimation.

\section{Introduction}

Survival analysis focuses on modeling and study time-to-event data and have seen substantial developments in the recent literature due to a wide range of applications. Among others, examples arise in medicine with clinical studies of patients' survival times, in liability with component's functional lifespans prior to failure or in sociology with elapsed unemployment times preceding the finding of a new employment. However, the times to the event of interest may not be fully observed, meaning the corresponding data is subject to censoring. This may occur, for example, when a clinical study has a predetermined follow-up period or when a patient withdraws from the study before the event occurs. \\
A variety of experimental designs enable longitudinal data to be observed in state systems. However, standard survival analysis models are inherently limited as they do not account for sequences of intermediate events that may precede the event of interest. In simple cases involving a single transition step, competing risk models \cite{And02,Put02} consider experiments in which individuals are subject to more than two different events. In this context, multi-state models extend the analysis to stochastic processes that cause individuals to transition between multiple states over time. This particularly offers a flexible and powerful modeling framework that accounts for the time through transient states that an individual or a system may evolve before reaching the state of interest. 
The relevance of such models in survival analysis is readily illustrated by concrete examples. For instance, a patient who has recovered from a disease can not subsequently die from it without first experiencing a relapse or an intermediate state, and ignoring these intermediate could compromise the coherence of the model description. In the literature, the framework of multi-state models is well described, as introduced in \cite{Hou00,And02b,Hou99}. Overall, many studies focus on estimating transition intensities, representing the instantaneous hazard of progressing to a designated state, given that one is occupying another state. When involved in a censoring framework, different assumptions can be made on the dependence between transition rates and times (see the reviewing work in \cite{Mei09}). In particular, time-homogeneous models where transition intensities are constant over time that have been discussed in the early works \cite{Kay86,Chi68} and are still considered in recent work such as \cite{Mac21}. For time-homogeneous Markov processes, a closed-form likelihood can be derived from Kolmogorov forward equations, which admits explicit matrix-exponential solutions for the transition intensity matrix. Other model extensions only assume the Markov property of the processes. However, as no explicit solution of the transition matrix exists in the non-homogeneous Markov case, several alternative methods have emerged. A first approach consists on considering piecewise constant transition intensities and to apply the time homogeneous methodology on each section \cite{Per01}. Depending on the censoring mechanism, alternative non-parametric approaches have also been proposed \cite{Aal78, Fry95}. These approaches have shown great interest as they overcome the limitations imposed by the piecewise constant assumption on transition intensities \cite[e.g.][]{Jol02, Tit11, Ken25}. Finally, few attempts to build efficient estimators in the non-Markovian context have been proposed \cite[see][]{Pep91,Str98,Mei06} which remains an active subject of interest, as seen in the recent works \cite{Una15,Mal21,And22}.\\ 
In parallel to multi-state models, dynamic prediction generalizes classical survival analysis by incorporating longitudinal information to update risk estimates over time. In this setting, predictions of the future events or states are continuously updated over time, conditional on the individual's current state and observed history. Rather than relying solely on baseline covariates, this framework incorporates the evolving history of an individual, such as intermediate events or longitudinal measurements, to refine the estimation of time-to-event outcomes. The review articles \cite{Sur17,Li23} provide a comparison between two mainstream approaches, namely joint modeling and landmarking. Joint models have undergone a substantial development in the recent literature \cite[e.g.][]{Tsi97,Tsi04,Riz11,Riz17,Asa15}. They simultaneously specify a model for the longitudinal process (e.g. repeated biomarker measurements) and a model for the time-to-event outcome, linking the two through an association structure. In contrast to joint models, landmarking relies on a sequence of prediction models defined at a set of pre-specified landmark times and does not require specifying the full joint distribution of the processes \cite{Hou07,Put17,Put22}. However, both approaches present limitations with modeling assumptions, possible loss of information on the full longitudinal history, and substantial computational burden in complex settings. To address these issues, deep learning-based methods such as proposed in \cite{Lin22} and \cite{May22} have recently emerged as flexible alternatives, aiming to relax the required assumptions and improve predictive accuracy while reducing computational costs.\\

\noindent 
In this article, we propose a novel methodology for first hitting-time experiments based on multi-state models with time-homogeneous Markov processes. The underlying Markov chain is endowed with a time-independent random covariate that influences both the transition intensities and the state-transition probabilities. In the recent work \cite{May22}, the authors introduced a deep learning methodology to study first hitting-time distributions with discrete-time Markov chains, relying on the property of temporal consistency. The model we develop extends this framework. In addition, it provides a natural extension to the continuous-time setting and relaxes several underlying assumptions. In particular, our approach allows for a terminal subset of the state space, rather than a single terminal state, and does not require prior specification of this subset for estimation. The approach we propose for estimating the distribution of the first hitting-time also relies on the property of temporal consistency from which an estimator of the density is derived. It follows a kernel based approach from which we study the non-asymptotic properties and provide sharp rates of convergence. In particular, the method is shown to be almost optimal in line with the minimax optimality framework. All in all, our approach allows to relax the proportional hazard assumption which is commonly used when dealing with time-homogeneous Markov multi-state models \cite[see][]{Mac21,Lou17} which is not always verified in practice. In real-world applications, there exist scenarios in which the visitation of certain states may obstruct the process from reaching the terminal state. To account for such configurations, our proposed model also enables absorbing subsets from the state space. Absorbing states may notably correspond to recovery states, making the estimation of the absorbing probabilities particularly relevant. In survival analysis, this phenomenon is referred to as cured individuals, and has attracted a large body of literature, as illustrated in the review article \cite{Ami18}. In line with this framework, we also propose a non-parametric estimator of the probability that the terminal subset is not reached, which equivalently returns an estimation of the cure rate.\\

\noindent
The remainder of the paper is given as follows. In Section \ref{sec::Modeldescrip}, the stochastic construction of the model is presented and an estimator of the first hitting-time distribution is proposed. Based on consistent estimators of the chain transition rate and the transition matrix, bounds for the prediction error of the density estimator and the absolute error-risk  of the cure rate are derived. Section \ref{Sec::Kernel} proposes kernel based estimators of the chain transition rate and the transition matrix that induce an almost minimax optimal convergence rate for the risk error of the first hitting-time density estimator. The finite sample performances of the estimation procedure are illustrated on simulated datasets in Section \ref{Sec::Simulation} and we highlight its practical applicability with a study on bone marrow transplant data. All proofs and technical lemmas are postponed to Section \ref{Sec::Proof}.

\section{Model description and inference} \label{sec::Modeldescrip}

\subsection{Model}

Let $X=(X_t,\,t\in\mathbb{R}_+)$ be a continuous Markov chain embedded with a time-independent random covariate $Z\in\mathcal{Z}$ in $\mathbb{R}^p$  and with density function $f_Z$. We assume the state space $\mathcal{X}$ to be finite. We also define $\lambda_z>0$ the conditional chain transition rate of the jumping times $(E_i)_{i\in\mathbb{N}}$ where $E_0=0$ almost surely. Note that unlike any generic Markov chain, we impose that the transition rate is independent from the chain positions. In other words, for any $i\geq 1$, $X_{E_i}$ might jump from its current state after a holding time $E_i-E_{i-1}$ which follows an exponential distribution with parameter $\lambda_z$ and independently from the past of the chain. For any fixed and possibly unknown terminal subset $A\subset\mathcal{X}$, the hitting time of interest is given by
\begin{eqnarray*}
T=\inf\{t\geq0,\,X_t\in A\}
\end{eqnarray*}
which represents the first time that the Markov chain enters the terminal subset $A$. Considering the jump chain $Y=(Y_n=X_{E_n})_{n\in\mathbb{N}}$ associated to $X$, we can define the number of jumps before $Y$ reaches the terminal state by 
\begin{eqnarray*}
    S=\inf\{n\geq0,\,Y_n\in A\}
\end{eqnarray*}
where its discrete probability distribution is characterized by the coefficients
\begin{eqnarray*}
    c_j(x,z)=\mathbb{P}(S=j|X_0=x,Z=z),\quad j\in\mathbb{N}.
\end{eqnarray*} 
Due to practical limitations, only a limited number of coefficients can be estimated. However, it is worth mentioning that, for a finite set $\mathcal{X}$, the relatively rapid decrease of the probabilities as $j$ increases enables us to consider only a few coefficients. This is especially useful for controlling the theoretical precision of the density function approximation. In this perspective, the next lemma provides an upper bound for the rate of convergence of the coefficients.    

\begin{lem} \label{lem: expo_decrease}
Assume that $\mathcal{X}$ is a finite state space. For any $z \in \mathcal{Z}$, there exist $m_z>0$, $0<r_z<1$ and $k_0\in\mathbb{N}^*$ such that for any $j\geq k_0$, $\displaystyle \sup_{x\in\mathcal{X}}c_j(x,z)\leq m_z r_z^{j}$.
\end{lem}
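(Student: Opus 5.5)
The plan is to bound $c_j(x,z)$ by the probability that the jump chain $Y$ stays for $j$ steps inside the set of non-terminal states from which $A$ is still reachable, and then to show that this set is left geometrically fast. Throughout, $z\in\mathcal{Z}$ is fixed and $P_z$ denotes the transition matrix of the jump chain $Y$ given $Z=z$. Since $\mathcal{X}$ is finite, every uniform-in-$x$ statement reduces to a maximum over finitely many states.

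\textbf{Step 1 (reduction to a substochastic matrix).} Define
\begin{eqnarray*}
B=\{x\in\mathcal{X}\setminus A:\ \mathbb{P}(S<\infty\,|\,Y_0=x,Z=z)>0\}.
\end{eqnarray*}
\begin{itemize}
\item If $x\in A$, then $S=0$ almost surely, so $c_j(x,z)=0$ for every $j\geq 1$.
\item If $x\notin A\cup B$, then $S=\infty$ almost surely, so $c_j(x,z)=0$ for every $j$.
\item Suppose $S=j$ with $1\le j<\infty$. Then $Y_0,\dots,Y_{j-1}\notin A$. Moreover, no $Y_i$ with $i<j$ can lie in $\mathcal{X}\setminus(A\cup B)$: by the Markov property, from such a state $A$ is reached with probability zero.
\end{itemize}
Hence, for $x\in B$,
\begin{eqnarray*}
c_j(x,z)\leq \mathbb{P}(Y_0,\dots,Y_{j-1}\in B\,|\,Y_0=x,Z=z).
\end{eqnarray*}
If $B=\emptyset$, the lemma holds trivially with $k_0=1$.

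\textbf{Step 2 (uniform escape probability).} Set $N=|\mathcal{X}|$. For each $b\in B$ there is a path in the transition graph of $P_z$ from $b$ into $A$ with positive probability. A shortest such path visits no state twice, so it has length at most $N$. Therefore
\begin{eqnarray*}
\delta_b:=\mathbb{P}(\exists\, i\leq N:\ Y_i\notin B\,|\,Y_0=b,Z=z)>0.
\end{eqnarray*}
Because $B$ is finite, $\delta:=\min_{b\in B}\delta_b>0$.

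\textbf{Step 3 (iteration by the Markov property).} Conditioning successively at times $N,2N,\dots$ and applying the strong Markov property of $Y$ (given $Z=z$) gives
\begin{eqnarray*}
\sup_{x\in B}\mathbb{P}(Y_0,\dots,Y_{kN}\in B\,|\,Y_0=x,Z=z)\leq (1-\delta)^k .
\end{eqnarray*}
For $j\geq 1$, take $k=\lfloor (j-1)/N\rfloor$. Then
\begin{eqnarray*}
c_j(x,z)\leq (1-\delta)^{(j-1)/N-1}=m_z r_z^j,
\qquad r_z=(1-\delta)^{1/N}\in(0,1),\quad m_z=(1-\delta)^{-1-1/N}.
\end{eqnarray*}
The case $\delta=1$ must be handled separately: then $c_j=0$ for $j>N$, and any $r_z\in(0,1)$ works with $k_0=N+1$. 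Otherwise one may take $k_0=1$.

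The only step needing care is Step 2. The existence of absorbing (cured) subsets means $A$ need not be reachable from every state, so one cannot simply invoke irreducibility. The key is to restrict to $B$ and to use a shortest-path argument, so that the escape horizon $N$ and the bound $\delta$ are uniform over the finite set $B$.
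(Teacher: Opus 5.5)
Your proposal is correct and follows essentially the same route as the paper: restrict to the set $B_z$ of non-terminal states from which $A$ is reachable, obtain a uniform positive probability of leaving $B_z$ within a fixed horizon (the paper uses $d_z=\max_x \ell_z(x)$ where you use $|\mathcal{X}|$), iterate with the Markov property to get $\mathbb{P}_z(\widetilde{S}>kd_z\mid Y_0=x)\le(1-\varepsilon_z)^k$, and convert this into $m_z r_z^j$ with the same form of constants. Your separate handling of the degenerate cases $B=\emptyset$ and $\delta=1$ is a small improvement: the paper's choice $r_z=(1-\varepsilon_z)^{1/d_z}$ would give $r_z=0$ and an undefined $m_z$ when $\varepsilon_z=1$.
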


In general, it is however uncertain that any chain trajectory surely attains the subset, hence resulting in censored observation. In particular, recordings from real data experiments typically have a limited number of observations per trajectory. We address this limitation by assuming that each chain stops at a random number of observations $M=L\wedge S$, where $L$ is a limiting step in $\mathbb{N}$, independent of $S$ given $Z$. This setting is similar to the independent censoring assumption in survival analysis, which assumes independence between the censoring mechanism and time-to-event outcomes. Formally, each piece of data from one individual is grouped as  $\left(E_M,\delta,(Y_i)_{i=1,\ldots,M},Z\right)$ where $\delta=\mathds{1}_{\{Y_M\in A\}}=\mathds{1}_{\{ S\leq L\}}$ is a censoring indicator telling wether the chain reaches the terminal subset or not. In the sequel, we thus consider samples of size $n$ given by
\begin{eqnarray*}
    \left\{\left(E^{(j)}_{M_j},\delta_j,(Y^{(j)}_i)_{i=0,\ldots,M_j},Z_j\right),\,j=1,\ldots,n\right\}.
\end{eqnarray*}

\noindent
\textbf{Notation} : Throughout the paper and for any covariate position $z\in\mathcal{Z}$, we fix the notation $\mathbb{P}_z=\mathbb{P}(.|Z=z)$ and  $\mathbb{E}_z=\mathbb{E}(.|Z=z)$ to ease the reading.

\subsection{Temporal consistency}
Given the covariate $Z=z$ and an initial state $X_0=x$, our main objective is to infer the hitting-time distribution from its conditional density function $f_T:\mathbb{R}_+\times\mathcal{X}\times\mathcal{Z}\to\mathbb{R}_+$. The stochastic construction of the model allows to decompose the distribution of $T$ throughout the number of jumps of $Y$ and the lengths of the holding times. Specifically and for any $x\in\mathcal{X}$, the density function of $T$ associated to the sub-distribution function $t\mapsto\mathbb{P}_z(T\leq t\;|\;Y_0=x)$ has the form
\begin{eqnarray*}
    f_T(t,x,z)=
   \sum_{j\geq1}c_j(x,z)\dfrac{\lambda_z^jt^{j-1}e^{-\lambda_zt}}{(j-1)!},\quad t\in\mathbb{R}_+\,z\in\mathcal{Z}.
\end{eqnarray*}
In order to estimate the hitting-time probabilities of the jump chain, we introduce an algorithm based on the temporal-differential learning framework as proposed in \cite{May22}. In our procedure, we propose an estimator of the density function by replacing the unknown model coefficients with their empirical counterparts, with
\begin{eqnarray*}
    f_{T,n}^{(k)}(t,x,z)=
    \sum_{j=1}^kc_{n,j}(x,z)\dfrac{\lambda_{n,z}^jt^{j-1}e^{-\lambda_{n,z}t}}{(j-1)!},\quad t\in\mathbb{R}_+\,z\in\mathcal{Z}.
\end{eqnarray*}
where $k\in\mathbb{N}^*$ represents the number of estimated coefficients. In general, any consistent estimator of the hitting-time coefficients could result in an efficient estimation. In this work, we also propose a method based on the Markov property of the jump chain, i.e.
\begin{eqnarray*}
    c_j(x,z)
    =\left\{
    \begin{matrix}
    \displaystyle\sum_{x'\in\mathcal{X}}p_z(x,x')c_{j-1}(x',z)\bold{1}_{\{x\notin A\}}& j\geq 1,\\
    \bold{1}_{\{x\in A\}}&\text{if } j=0.
    \end{matrix}
    \right.
\end{eqnarray*}
Let $p_{n,z}$ denote a proper conditional estimator of the probabilities $p_z$, in the sense that for any $n\in\mathbb{N}^*$ and $z\in\mathcal{Z}$, $(p_{n,z}(x,x'))_{x,x'\in\mathcal{X}}$ is a transition matrix. Since the subset $A$ is possibly unknown, we define $$A_n=\{Y^{(j)}_{M_j};\,\delta_j=1,\,j=1,\ldots,n\},$$ the observed elements from the terminal subset. The computation of the $j$-th coefficient relies on the previous induction and only the first $k$ terms are considered during the procedure, so that our estimators are given by
\begin{eqnarray}
\label{temporalsa}
    c_{n,j}^{(k)}(x,z)
    =\left\{
    \begin{matrix}
    0&\text{if } j>k,\\[.2cm]
    \displaystyle\sum_{x'\in\mathcal{X}}p_{n,z}(x,x')c_{n,{j-1}}^{(k-1)}(x',z)\bold{1}_{\{x\notin A_n\}}& 1\leq j\leq k,\\
    \bold{1}_{\{x\in A_n\}}&\text{if } j=0.
    \end{matrix}
    \right.
\end{eqnarray}
In the following proposition, we propose and prove upper bounds for the risk error of our procedure for any generic estimators of the coefficients or based on the temporal survival analysis.

\begin{prop}
\label{prop::density}
Let $\lambda_{n,z}$ and $(c_{n,j}(x,z))_{1\leq j\leq k}$ be any consistent estimators of $\lambda_z$ and $(c_j(x,z))_{1\leq j\leq k}$ respectively. Also assume that $(c_{n,j}(x,z))_{1\leq j\leq k}$ is a non-negative sequence with $\displaystyle \sum_{j=0}^{k} c_{n,j}(x,z) \leq 1$. Then for $k$ large enough
\begin{eqnarray*}
    &&\dfrac{1}{3} \mathbb{E}\left[\int_0^{+\infty}\sup_{x\in\mathcal{X}}\left( f_{T,n}^{(k)}(t,x,z)- f_T(t,x,z)\right)^2dt\right]\\
    &&\leq\dfrac{(k+1)^{3/2}}{2} \mathbb{E}\left[\dfrac{(\lambda_z-\lambda_{n,z})^2}{\lambda_z+\lambda_{n,z}}\right] + \dfrac{2k-1}{2}\lambda_z\mathbb{E}\left[\sup_{x\in\mathcal{X},\,j=1,\ldots,k}\left(c_{n,j}(x,z)-c_j(x,z)\right)^2\right] +\dfrac{\lambda_zm_z^2r_z^{2k+1}}{2(1-r_z)}.
\end{eqnarray*}
Furthermore, if $c_{n,j}(x,z)=c^{(k)}_{n,j}(x,z)$ as described in (\ref{temporalsa}), then
\begin{eqnarray}
\label{theo::temporalinequality}
    &&\nonumber\dfrac{1}{2}\mathbb{E}\left[\sup_{x\in\mathcal{X},\,j=1,\ldots,k}\left(c^{(k)}_{n,j}(x,z)-c_j(x,z)\right)^2\right]\\
    &\leq& \left(\sum_{x\in\mathcal{X}}\mathbb{P}_z(S\leq k-1|Y_0=x)\right)^2\mathbb{E}\left[\sup_{x,x'\in\mathcal{X}}\left(p_{n,z}(x,x')-p_z(x,x')\right)^2\right]\\
    \nonumber&+&(k+1)^2\left[\sum_{x\in A}\mathbb{P}(Y_S\neq x,S\leq L)^n+\mathbb{P}(S>L)^n\right].
\end{eqnarray}
\end{prop}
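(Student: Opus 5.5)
For the first inequality, I will split the error into three pieces and use $(a+b+c)^2\le 3(a^2+b^2+c^2)$, which explains the factor $1/3$ on the left. Write $g_j(t,\lambda)=\lambda^j t^{j-1}e^{-\lambda t}/(j-1)!$, the Gamma$(j,\lambda)$ density. Then
\[
f^{(k)}_{T,n}-f_T=\sum_{j=1}^k c_{n,j}\bigl(g_j(\cdot,\lambda_{n,z})-g_j(\cdot,\lambda_z)\bigr)+\sum_{j=1}^k (c_{n,j}-c_j)\,g_j(\cdot,\lambda_z)-\sum_{j>k}c_j\,g_j(\cdot,\lambda_z).
\]

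\emph{Third (tail) term.} By Cauchy--Schwarz with weights, $\bigl(\sum_{j>k}c_jg_j\bigr)^2\le \sum_{j>k}c_j^2\,\sum_{j>k}g_j$ is not directly what I want. Instead I will use the closed form $\int_0^\infty g_ig_j\,dt=\lambda\binom{i+j-2}{i-1}2^{-(i+j-1)}$, bounded by $\lambda/2$. It is simpler to write $\int(\sum_{j>k} c_j g_j)^2\le \sum_{j>k}c_j\sum_{j>k}c_j\int g_j^2$-type bounds. Concretely, I will use $\sup_t g_j(t)\le \lambda$ and $\int g_j=1$, so $\int g_ig_j\le\lambda$, or the sharper $\lambda/2$. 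Combined with $\sup_x c_j\le m_zr_z^j$ from Lemma~\ref{lem: expo_decrease} (for $k\ge k_0$, hence ``$k$ large enough''), this gives $\frac{\lambda_z m_z^2 r_z^{2k+1}}{2(1-r_z)}$ up to matching constants. The sup over $x$ is harmless since the bounds are uniform in $x$.

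\emph{Second term.} By Cauchy--Schwarz, $\bigl(\sum_{j\le k}a_jg_j\bigr)^2\le \sup_j a_j^2\,(\sum_j g_j)^2$, and $\int(\sum_{j\le k}g_j)^2 dt=\sum_{i,j\le k}\int g_ig_j$. The Gamma-product formula $\int g_ig_j=\lambda_z\binom{i+j-2}{i-1}2^{-(i+j-1)}$ summed over $i,j\le k$ should give $\lambda_z(2k-1)/2$: grouping by $s=i+j-2$, each antidiagonal contributes at most $\lambda_z/2$, and there are $2k-1$ antidiagonals.

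\emph{First term.} Using $\sum_j c_{n,j}\le1$ and Jensen, $\bigl(\sum_j c_{n,j}\Delta_j\bigr)^2\le\sum_j c_{n,j}\Delta_j^2\le\max_{j\le k}\Delta_j^2$ pointwise, so it suffices to bound $\int (g_j(\cdot,\lambda')-g_j(\cdot,\lambda))^2\,dt$. Expanding with the same Gamma integral, $\int g_j(\lambda)g_j(\lambda')=\binom{2j-2}{j-1}\frac{(\lambda\lambda')^j}{(\lambda+\lambda')^{2j-1}}$. This yields an expression in $\lambda,\lambda'$ that must be bounded by $\frac{(k+1)^{3/2}}{2}\frac{(\lambda-\lambda')^2}{\lambda+\lambda'}$. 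This is the main obstacle. I would first try the Hellinger-type route, writing it as $\frac{C_j}{\lambda+\lambda'}$ times a difference controlled through $2\sqrt{\lambda\lambda'}\le\lambda+\lambda'$ and a mean-value argument in the ratio $\rho=\lambda'/\lambda$. Alternatively, I would try $\partial_\lambda g_j$ and integrate along the segment. The factor $k^{3/2}$ should come from $\binom{2j-2}{j-1}2^{-2j}\sim 1/\sqrt{\pi j}$ times a $j^2$ from the derivative.

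\emph{Second inequality.} I will work on the event $\{A_n=A\}$ and its complement. On the complement, $|c^{(k)}_{n,j}-c_j|\le$ something like $k+1$ (crudely), which gives the squared factor $(k+1)^2$. The probability of the complement is bounded by a union bound: either some $x\in A$ is never observed as a terminal endpoint, with probability $\mathbb P(Y_S\ne x,S\le L)^n$ up to the censoring case, or all chains are censored, contributing $\mathbb P(S>L)^n$. On $\{A_n=A\}$, I will use the recursion. Telescoping, $c_{n,j}-c_j=\sum_{x'}(p_{n,z}-p_z)(x,x')c_{j-1}(x')+\sum_{x'}p_{n,z}(x,x')(c_{n,j-1}-c_{j-1})(x')$. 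Iterating and using that $p_{n,z}$ is stochastic bounds the sup error by $\sup|p_n-p|\sum_{i<j}\sum_{x'}c_i(x')\le \sup|p_n-p|\sum_{x'}\mathbb P_z(S\le k-1\mid Y_0=x')$. Squaring, and using $(a+b)^2\le 2a^2+2b^2$ to combine the two events, gives the factor $1/2$.
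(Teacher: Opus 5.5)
Your three-term split, the antidiagonal bound $\lambda_z(2k-1)/2$ for the middle term, and the recursion on $\{A_n=A\}$ are sound. However, the step you flag as the main obstacle is genuinely missing, and the reduction before it is invalid. From $\bigl(\sum_j c_{n,j}\Delta_j\bigr)^2\le\max_{j\le k}\Delta_j^2$ pointwise in $t$ (with $\Delta_j=g_j(\cdot,\lambda')-g_j(\cdot,\lambda)$), you cannot pass to a single $\int\Delta_j^2\,dt$: the maximum is taken pointwise in $t$, and $\int\max_j\neq\max_j\int$. You must bound by $\sum_{j=1}^k\int\Delta_j^2\,dt$, as the paper does. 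Each summand is only of order $\sqrt{j}\,(\lambda-\lambda')^2/(\lambda+\lambda')$ (the derivative contributes a factor $j$, not $j^2$), and $(k+1)^{3/2}$ comes from the sum.

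Your derivative route also cannot give the required form. Since $\int(\partial_\mu g_j)^2\,dt=\binom{2j-2}{j-1}j\,4^{-j}/\mu$, integrating along the segment gives $\binom{2j-2}{j-1}j4^{-j}\,|\lambda'-\lambda|\,|\log(\lambda'/\lambda)|$. The target is the same prefactor times $2(\lambda'-\lambda)^2/(\lambda+\lambda')$. Because $|\log\rho|\ge 2|\rho-1|/(\rho+1)$, the former is always larger, and the ratio is unbounded as $\lambda'/\lambda\to0$ or $\infty$. This matters because $\lambda_{n,z}$ is unrestricted.

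The missing idea is algebraic and starts from the closed form you already wrote. With $q=\lambda'/(\lambda+\lambda')$,
\[
\int_0^{\infty}\Delta_j^2\,dt=2(\lambda+\lambda')\binom{2j-2}{j-1}\Bigl(4^{-j}-\bigl(q(1-q)\bigr)^{j}\Bigr).
\]
Use $a^j-b^j\le(a-b)\,j\,a^{j-1}$ for $0\le b\le a=\tfrac14$, together with $\tfrac14-q(1-q)=(\tfrac12-q)^2=\frac{(\lambda-\lambda')^2}{4(\lambda+\lambda')^2}$. Adding $\binom{2i}{i}4^{-i}\le(i+1)^{-1/2}$ gives $\int\Delta_j^2\,dt\le\sqrt{j}\,\frac{(\lambda-\lambda')^2}{2(\lambda+\lambda')}$, and $\sum_{j\le k}\sqrt{j}\le(k+1)^{3/2}$.

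A smaller point: in the tail term, the crude bound $\int g_ig_j\le\lambda_z/2$ gives $\lambda_zm_z^2r_z^{2k+2}/(2(1-r_z)^2)$, which exceeds the stated term when $r_z>1/2$. Group by antidiagonals as in the middle term to get $\lambda_zm_z^2r_z^{2k+2}/(2(1-r_z))$ instead.

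For the second inequality, splitting on $\{A_n=A\}$ is cleaner than the paper's route. The paper carries $\mathds{1}_{\{x\in A\setminus A_n\}}$ through the recursion, collecting $j\,\mathds{1}_{\{A\setminus A_n\neq\emptyset\}}$ plus a terminal term, whence the factor $(k+1)^2$. Since $c^{(k)}_{n,j},c_j\in[0,1]$, your split even gives constant $1$ in front of $\mathbb{P}(A_n\neq A)$.

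Your bound on that probability fails, though. A sample misses $x\in A$ iff it is censored or ends elsewhere in $A$, so $\mathbb{P}(x\notin A_n)=\bigl(\mathbb{P}(Y_S\neq x,S\le L)+\mathbb{P}(S>L)\bigr)^n$. Splitting into ``all samples end elsewhere'' and ``all samples censored'' drops the mixed configurations. Take two terminal states, each reached before censoring with probability $1/4$, and $\mathbb{P}(S>L)=1/2$. Then $\mathbb{P}(A_n\neq A)\ge(3/4)^n$, whereas the claimed bound $2\cdot4^{-n}+2^{-n}$ is smaller for every $n\ge2$.

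The paper's proof makes exactly the same split, so this is a defect of the stated bound itself, not something the paper supplies. With the oracle $p_{n,z}=p_z$ in that example, the left side is at least a constant times $(3/4)^n$ while the right side is $(k+1)^2(2\cdot4^{-n}+2^{-n})$, so the inequality fails for fixed $k$ and large $n$. What your argument does prove is the inequality with the last bracket replaced by $\sum_{x\in A}\bigl(1-\mathbb{P}(Y_S=x,S\le L)\bigr)^n$.
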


\begin{rem}
    Intuitively, the assumptions on $(c_{n,j}(x,z))_{1 \leq j \leq k}$ ensure that the sequences return proper probabilities on mutually disjoint events. Furthermore, despite the apparent arbitrariness of the choice of $k$, the discrepancy between $f_T$ and $f_{T,n}^{(k)}$ of order $r_z^{2k}$ offers practical insights about the control of the error of approximation. For instance, a comparison between our machine error and the latter rate of convergence is used in the simulation section to select the number of iterations.  
\end{rem}

\subsection{Absorbing set and cure rate} \label{subsec:: absorbcure}
In practice, it is common for a segment of the population at risk to never experience the event of interest. In our case, this represents the fraction of chains that never reach the terminal set A and that may remain trapped in an isolated subset. Consequently, the time $S$ may become infinite and be censored. In the context of survival analysis, this phenomenon is referred to as "cured individuals" and often hinders the distinction between them and individuals with insufficient follow-up. In order to consider such scenario in our settings,  we introduce for any covariate position $z\in\mathcal{Z}$
$$B_z := \left\{ x \in \mathcal{X} \backslash A,\, \exists (y,i) \in A \times \mathbb{N}^*\text{ with } p_z^i (x,y) > 0  \right\}$$ the subspace of vertices connected to the terminal set $A$, and $\displaystyle C_z := \mathcal{X} \backslash (A \cup B_z)$, the subspace of isolated vertices. The next Lemma shows that, provided the kernel transition estimation excludes non-connected points, only information about the estimators' efficiency on $B_z$ is required.

\begin{lem}
\label{lem::isolated}
Let $p_{n,z}$ be any conditional estimator of the kernel transition function such that for $n$ large enough and any $x,x'\in\mathcal{X}$
\begin{eqnarray*}
    p_z(x,x')=0 \Longrightarrow p_{n,z}(x,x')=0,\text{ a.s.}
\end{eqnarray*}
Then for any $x\in C_z$, $c_{n,j}^{(k)}(x,z)=0$ and in (\ref{theo::temporalinequality}), the set $\mathcal{X}$ can be replaced by $B_z$. 
\end{lem}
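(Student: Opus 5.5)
The plan is to show two facts. First, under the support-preservation hypothesis, the estimated chain started in $C_z$ can never reach $A_n$. Second, revisiting the proof of (\ref{theo::temporalinequality}), every place where a sum or supremum over $x\in\mathcal{X}$ appears picks up zero contribution from $C_z$. The points of $A$ themselves enter only through the indicator terms and the $A_n$-identification error.

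\textbf{Step 1: $C_z$ is closed under $p_z$.} Let $x\in C_z$ and suppose $p_z(x,x')>0$. If $x'\in A$, then $x\in B_z$ with $i=1$, a contradiction. If $x'\in B_z$, there is some $i$ with $p_z^i(x',y)>0$ for a $y\in A$. Then $p_z^{i+1}(x,y)\ge p_z(x,x')p_z^i(x',y)>0$, again a contradiction. Hence $p_z(x,\cdot)$ is supported in $C_z$.

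\textbf{Step 2: the same closure holds for $p_{n,z}$.} By the hypothesis, for $n$ large, $p_{n,z}(x,\cdot)$ is a.s.\ supported in $C_z$ whenever $x\in C_z$.

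\textbf{Step 3: $C_z\cap A_n=\emptyset$.} The set $A_n$ consists of observed terminal states, so $A_n\subset A$, and $C_z\cap A=\emptyset$ by definition.

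\textbf{Step 4: induction on $j$.} We show $c^{(k)}_{n,j}(x,z)=0$ for all $x\in C_z$.
\begin{itemize}
\item For $j=0$, the value is $\bold{1}_{\{x\in A_n\}}=0$ by Step 3.
\item For $j>k$, the value is $0$ by definition.
\item For $1\le j\le k$, (\ref{temporalsa}) gives $c^{(k)}_{n,j}(x,z)=\sum_{x'\in C_z}p_{n,z}(x,x')c^{(k-1)}_{n,j-1}(x',z)$, using Step 2 to restrict the sum. Each term vanishes by the induction hypothesis. The induction is run simultaneously over the truncation parameter, for all $k$.
\end{itemize}
Likewise $c_j(x,z)=0$ on $C_z$ for every $j$, by Step 1 or directly because $\mathbb{P}_z(S<\infty\mid Y_0=x)=0$.

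\textbf{Step 5: redoing the bound (\ref{theo::temporalinequality}).} By Step 4 the difference $c^{(k)}_{n,j}-c_j$ vanishes identically on $C_z$. On $A$ it vanishes as soon as $A_n=A$; the complementary event is exactly what the last term $(k+1)^2[\cdots]$ controls. So the supremum on the left-hand side may be taken over $B_z$, together with the terms already handled on $A$.

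In the recursive decomposition used for (\ref{theo::temporalinequality}), $c^{(k)}_{n,j}(x)-c_j(x)$ for $x\notin A$ splits as
\[
\sum_{x'}(p_{n,z}-p_z)(x,x')c_{j-1}(x')+\sum_{x'}p_{n,z}(x,x')\bigl(c^{(k-1)}_{n,j-1}-c_{j-1}\bigr)(x').
\]
In the first sum, $c_{j-1}(x')=0$ for $x'\in C_z$, so only $x'\in B_z\cup A$ contribute. Accumulating over $j\le k$ yields the factor $\sum_{x'}\mathbb{P}_z(S\le k-1\mid Y_0=x')$. The points $x'\in C_z$ contribute zero probability, and the contribution of $A$ is absorbed as in the original proof (or is bookkept exactly as there). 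For $x\in B_z$, $p_{n,z}(x,\cdot)$ may charge $C_z$, but there the inner difference is $0$. Hence all suprema and sums range over $B_z$ only.

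The main obstacle is this last bookkeeping. One must check that the original proof's factor $\sum_{x\in\mathcal{X}}\mathbb{P}_z(S\le k-1\mid Y_0=x)$ really arises from summing $c_{j-1}(x')$, so that zero terms can simply be dropped. One must also confirm that the supremum of $|p_{n,z}-p_z|$ can be restricted to rows in $B_z$. This holds because rows in $C_z$ only ever multiply vanishing quantities, by Steps 2 and 4.
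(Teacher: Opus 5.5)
Your Steps 1--4 prove the first claim correctly, and they follow essentially the paper's argument. The paper unrolls (\ref{temporalsa}) into a sum over paths $x\to x_1\to\cdots\to x_j\in A_n$ weighted by $p_{n,z}$. It then notes that every such path starting in $C_z$ must use a transition with $p_z=0$, hence with $p_{n,z}=0$. Your closedness of $C_z$ under $p_z$ and $p_{n,z}$, followed by induction on $j$, is the same idea, organized more carefully. The paper's proof stops there and does no bookkeeping for the second claim. Your Step 5 therefore goes beyond the paper, and that is where the gap lies.

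The step ``the contribution of $A$ is absorbed as in the original proof'' does not hold, and neither does your conclusion that all sums and suprema range over $B_z$ only. Nothing in the original proof absorbs $A$:
\begin{itemize}
\item The factor $\sum_{x}\mathbb{P}_z(S\le k-1\mid Y_0=x)$ arises as $\sum_{l}\sum_{x}c_{j-l}(x,z)$. Its $c_0=\mathds{1}_{A}$ part contributes exactly $|A|$.
\item The columns $x'\in A$ of $p_{n,z}-p_z$ are indispensable. On $\{A_n=A\}$ and for $x\notin A$, one has $c^{(k)}_{n,1}(x,z)-c_1(x,z)=\sum_{x'\in A}(p_{n,z}-p_z)(x,x')$.
\end{itemize}

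In fact the literal $B_z$ version cannot be proved. Take $\mathcal{X}=\{a,b,c\}$, $A=\{a\}$, $p_z(b,a)=q\in(0,1)$, $p_z(b,c)=1-q$ and $p_z(c,c)=1$. Then $B_z=\{b\}$ and $C_z=\{c\}$. The hypothesis forces $p_{n,z}(b,b)=0=p_z(b,b)$, so $\sup_{x,x'\in B_z}|p_{n,z}(x,x')-p_z(x,x')|=0$. The right-hand side therefore reduces to the last term, which is exponentially small in $n$. The left-hand side is at least $\frac{1}{2}\mathbb{E}[(p_{n,z}(b,a)-q)^2\mathds{1}_{\{A_n=A\}}]$. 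For the kernel estimator this is of order $(nh^p)^{-1}$, not exponentially small, so the inequality fails for large $n$.

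What your Step 5 does establish is that $C_z$ can be discarded:
\begin{itemize}
\item The summation and the column index can be restricted to $A\cup B_z$.
\item The row index can be restricted to $B_z$, provided you bound the inner difference at $x_1\in A$ directly by $\mathds{1}_{\{A\setminus A_n\neq\emptyset\}}$ instead of recursing.
\end{itemize}
That is the defensible form of the second claim. You should state and prove it in that form rather than asserting $B_z$ throughout.
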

\noindent
There is also particular interest in estimating the proportion of cured individuals, i.e., the probability that $T=+\infty$. Based on the estimation of the density of $T$, we have that 
\begin{eqnarray*}
    \mathbb{P}_z(T=+\infty|Y_0=x)=\left\{
    \begin{matrix}
        0    &x\in A,\\
        1-\sum_{j\geq1}c_j(x,z)& x\in B_z,\\
        1&x\in C_z.
    \end{matrix}
    \right.
\end{eqnarray*}
We thus propose an estimator of the cure rate replacing the unknown coefficients with their empirical counterparts and show that the risk is controlled by the risk of the density estimator. For any $x\in\mathcal{X}$ and $z\in\mathcal{Z}$, we define $R_{n}^{(k)}(x,z)$ the cure rate estimator given by
\begin{eqnarray*}
    R_{n}^{(k)}(x,z)=\left\{
    \begin{matrix}
            1-\sum_{j\geq1}c_{n,j}^{(k)}(x,z)& \text{ if } x\notin A_n.\\
            0& \text{ if }x\in A_n
    \end{matrix}
    \right.
\end{eqnarray*}

\begin{lem}
\label{lem::curerate}
Under the assumptions of Proposition \ref{prop::density}, we have for $k$ large enough
\begin{eqnarray*}
    \mathbb{E}\left[\sup_{x\in\mathcal{X}}\left(R_{n}^{(k)}(x,z)-\mathbb{P}_z(T=+\infty|Y_0=x)\right)^2\right]&\leq& 2 k^2 \mathbb{E}\left[\sup_{x\in\mathcal{X},\,j=1,\ldots,k}\left(c^{(k)}_{n,j}(x,z)-c_j(x,z)\right)^2\right]+2m_z^2\dfrac{r_z^{2(k+1)}}{(1-r_z)^2}\\
    &&+2\sum_{x\in A}\mathbb{P}(Y_S\neq x,S\leq L)^n+\mathbb{P}(S>L)^n.
\end{eqnarray*}
\end{lem}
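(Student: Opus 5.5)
We split according to whether the event $\{A_n = A\}$ holds. On this event the estimator $R_n^{(k)}(x,z)$ and the target $\mathbb{P}_z(T=+\infty|Y_0=x)$ are both $0$ for $x\in A$, so only $x\notin A$ contributes. On the complement we use the crude bound $|R_n^{(k)}-\mathbb{P}_z(T=+\infty|Y_0=x)|\leq 1$. This holds because $c^{(k)}_{n,j}\geq 0$ with $\sum_j c^{(k)}_{n,j}\leq 1$, as in Proposition \ref{prop::density}, so both quantities lie in $[0,1]$. The error then splits as
\[
\mathbb{E}\Big[\sup_x(\cdots)^2\ind_{\{A_n=A\}}\Big]+\mathbb{P}(A_n\neq A).
\]

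\textbf{Bounding $\mathbb{P}(A_n\neq A)$.} First note that $A_n\subset A$ always, since every observed endpoint with $\delta_j=1$ lies in $A$. Hence $A_n\neq A$ means some $x\in A$ is never observed as an uncensored endpoint. For a fixed $x\in A$, each of the $n$ i.i.d. individuals misses $x$ with probability $\mathbb{P}(Y_S\neq x,S\leq L)+\mathbb{P}(S>L)$. We then bound the $n$-th power of this sum by the powers of its parts. This is the same device that produced the last bracket of (\ref{theo::temporalinequality}) in Proposition \ref{prop::density}, and we reuse it directly, adding a union bound over $x\in A$. If that earlier derivation carries a $(k+1)^2$ factor, here only a factor $2$ (or $1$) appears. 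The reason is that the pointwise discrepancy is bounded by $1$, not by $k+1$.

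\textbf{The main term, for $x\notin A$ on $\{A_n=A\}$.} We write
\[
R_n^{(k)}-\mathbb{P}_z(T=+\infty|Y_0=x)=\sum_{j=1}^k\big(c_j-c_{n,j}^{(k)}\big)+\sum_{j>k}c_j ,
\]
using that $c^{(k)}_{n,j}=0$ for $j>k$. For $x\in C_z$ both sides are handled by Lemma \ref{lem::isolated}, or the identity holds trivially since all $c_j=0$. We then apply $(a+b)^2\leq 2a^2+2b^2$.

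For the first sum, Cauchy--Schwarz gives
\[
\Big(\sum_{j=1}^k(c_j-c_{n,j}^{(k)})\Big)^2\leq k^2\sup_{j\leq k}(c_j-c_{n,j}^{(k)})^2 .
\]
Taking the supremum over $x$ and then expectations yields the term $2k^2\,\mathbb{E}[\cdots]$.

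For the tail, Lemma \ref{lem: expo_decrease} with $k\geq k_0$ gives $\sum_{j>k}c_j\leq m_z r_z^{k+1}/(1-r_z)$. Squaring and doubling produces $2m_z^2 r_z^{2(k+1)}/(1-r_z)^2$.

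\textbf{Expected obstacle.} The main delicacy is matching constants on the event $A_n\neq A$, in particular obtaining the factor $2$ on the $A$-sum but only $1$ on $\mathbb{P}(S>L)^n$. The route I would try first is to use $(a+b)^n\leq 2^{n-1}(a^n+b^n)$ exactly as in the proof of Proposition \ref{prop::density}, and then absorb constants. If that fails to give the stated constants, I would instead bound the miss event $\{\text{all }\delta_j=0\}\cup\bigcup_{x\in A}\{\text{no uncensored endpoint equals }x\}$ directly.
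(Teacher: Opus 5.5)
\paragraph{The main term is fine; the last term is not.}
Your treatment of the main term is correct and matches the paper's argument. On $\{A_n=A\}$ and for $x\notin A$ you write the error as $\sum_{j\le k}(c_j-c^{(k)}_{n,j})+\sum_{j>k}c_j$. You then use $(a+b)^2\le 2a^2+2b^2$, the bound $|\sum_{j\le k}d_j|\le k\max_{j\le k}|d_j|$, and Lemma~\ref{lem: expo_decrease} for the tail. Splitting on the event $\{A_n=A\}$ rather than case by case in $x$ is, if anything, cleaner than the paper's write-up.

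The gap is in the last term. You correctly note that a fixed $x\in A$ is missed by one trajectory with probability $a_x+b$, where $a_x=\mathbb{P}(Y_S\neq x,S\leq L)$ and $b=\mathbb{P}(S>L)$. Hence $\mathbb{P}(x\notin A_n)=(a_x+b)^n=(1-\mathbb{P}(Y_S=x,S\leq L))^n$ exactly. Your next step bounds $(a_x+b)^n$ by a constant times $a_x^n+b^n$, and this is false. For $a_x,b>0$ and $n\geq 2$ one has $(a_x+b)^n>a_x^n+b^n$. For $a_x=b=\tfrac12$ the ratio is $2^{n-1}$, so $(a+b)^n\le 2^{n-1}(a^n+b^n)$ is attained. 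Its factor $2^{n-1}$ cannot be absorbed into a constant independent of $n$.

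\paragraph{The stated bound is false, so no route can reach it.}
Bounding the miss event directly cannot succeed either, because the inequality as stated fails. Take the following setting:
\begin{itemize}
\item no covariate, $\mathcal{X}=\{0,a,b\}$ and $A=\{a,b\}$ with $a,b$ absorbing;
\item $Y_0=0$, $L\equiv1$, and $p(0,\cdot)=(\tfrac12,\tfrac12-\epsilon,\epsilon)$ on $(0,a,b)$;
\item the oracle estimators $p_{n,z}=p_z$ and $\lambda_{n,z}=\lambda_z$.
\end{itemize}
Then $c_j(0,z)=2^{-j}$, so $m_z=1$, $r_z=\tfrac12$ are admissible. Moreover $c^{(k)}_{n,j}(a,z)=c^{(k)}_{n,j}(b,z)=0$ for $j\geq1$, and $c^{(k)}_{n,j}(0,z)=2^{1-j}[(\tfrac12-\epsilon)\mathds{1}_{\{a\in A_n\}}+\epsilon\mathds{1}_{\{b\in A_n\}}]$.

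The event $\{b\notin A_n\}$ has probability $(1-\epsilon)^n$. On it, $R^{(k)}_n(b,z)=1$ against a target of $0$, so the left-hand side is at least $(1-\epsilon)^n$. On the right-hand side:
\begin{itemize}
\item the first term is at most $k^2(\tfrac12+\epsilon)^n+4k^2\epsilon^2$;
\item the second term equals $2\cdot4^{-k}$;
\item the third term is $2\epsilon^n+2(\tfrac12-\epsilon)^n+2^{-n}$.
\end{itemize}
Fix $k\geq2$ and $n$ with $(k^2+3)2^{-n}<\tfrac12$, and let $\epsilon\to0$. The left side tends to $1$, while the right side tends to a limit below $\tfrac58$.

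\paragraph{Where the paper goes wrong, and what you do prove.}
The ``device'' you borrow from the end of the proof of Proposition~\ref{prop::density} is exactly where the paper errs. It writes $\{x\notin A_n\}$ as $\bigcap_j\{Y^{(j)}_{S_j}\neq x,\delta_j=1\}$ together with $\{A_n=\emptyset\}$. This omits samples that mix censored trajectories with trajectories ending elsewhere in $A$. Your argument does establish the lemma with the last line replaced by $\sum_{x\in A}(1-\mathbb{P}(Y_S=x,S\leq L))^n$.
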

\noindent
Note that in the previous lemma, the rate of convergence of order $k^2$ against the risk coefficient estimator is clearly slower than the expected rate of $k^{3/2}$ obtained in the density function estimation. Indeed, there is no direct relationship between the aforementioned two risks.

\section{Kernel based approach} \label{Sec::Kernel}

In this section, we propose a weighted moment methods where the transition rate and Markov coefficients are locally approximated via empirical kernel-type statistics. This approach offers a fully data-driven procedure that is known to work well with low dimensions $p$. Furthermore, our procedure will show that optimal convergence rates can be achieved under mild conditions. For this method, we impose $\lambda_z$ to be uniformly bounded, so that there exist$\lambda_{\min},\lambda_{\max}>0$ with $\lambda_{\min}\leq\lambda_z\leq\lambda_{\max}$ for any $z\in\mathcal{Z}$. We also impose some Hölder-type regularity conditions on the model functions due to the regression analysis.

\paragraph{Assumption $(\mathcal{H})$}: There exist $ 0 < \alpha\le  1$ and a constant $C>0$ such that for any $z,z' \in \mathcal{Z}$
\begin{itemize}
    \item[-] $ |f_Z(z)-f_Z(z')|\leq C\Vert z-z'\Vert^\alpha,$
    \item[-] $ |\lambda_z-\lambda_{z'}|\leq C\Vert z-z'\Vert^\alpha,$
    \item[-] $ |p_z(x,x')-p_{z'}(x,x')|\leq C\Vert z-z'\Vert^\alpha$, for any $x,x'\in\mathcal{X}$.\\
\end{itemize}

\noindent
Let $K$ be any positive kernel function taking values in the unit ball of $\mathbb{R}^p$ with respect to any norm $\Vert .\Vert$ and of integral equal to $1$. For any $z\in\text{int}(\mathcal{Z})$ with $f_Z(z)>0$, we introduce the weights 
\begin{eqnarray*}
    \omega_{h,z}(z')=\dfrac{K_h(z-z')}{\sum_{i=1}^nK_h(z-Z_i)},\quad\forall z,z'\in\mathcal{Z}
\end{eqnarray*}
where $K_h(.)=K(./h)/h^p$ and $h=h_n$ is a non-random sequence such that $h\to 0$ and $nh^p\to +\infty$ as $n\to+\infty$. We also impose that the kernel function is bounded away from zero over its support, i.e. one can find positive constants $c_K$ and $C_K$ such that $c_K\leq K(u)\leq C_K$ for any $u$ in the support of $K$. The conditional transition rate estimator is then given by
\begin{eqnarray*}
    \lambda_{n,z}=\left(\sum_{j=1}^n\omega_{h,z}(Z_j)\dfrac{E_{M_j}}{M_j}\right)^{-1}\wedge\tilde{\lambda}
\end{eqnarray*}
where $\tilde{\lambda}$ is any constant large enough so that $\tilde{\lambda}\geq\lambda_{\max}$. 
\begin{rem}
    In practice, limiting the rate estimator below an arbitrary and large enough threshold does not impose any constraints, but it is useful for controlling the quadratic error. This prevents the estimator from considering experiments where the hitting times and/or the number of covariates near $z$ are relatively small.
\end{rem}
\noindent
 In this perspective, we first introduce an estimator of the transition probabilities of the jump chain given by
\begin{eqnarray*}
    p_{n,z}(x,x')=\dfrac{\sum_{j=1}^n\omega_{h,z}(Z_j)N_j^{x,x'}}{\sum_{j=1}^n\omega_{h,z}(Z_j)N_j^x}
\end{eqnarray*}
with the convention that the estimator is null if the denominator in the previous expression equals 0 and where
\begin{eqnarray*}
    N_j^{x,x'}=\sum_{i=1}^{M_j}\bold{1}_{\{Y_i^{(j)}=x',\,Y_{i-1}^{(j)}=x\}}\quad\text{and}\quad N_j^x=\sum_{i=1}^{M_j}\bold{1}_{\{Y_{i-1}^{(j)}=x\}}.
\end{eqnarray*}
In the next theorem, we provide the theoretical guarantees of our approach and propose a non-asymptotic convergence rate.
\begin{theorem}
\label{theorem::kernelmethod}
Assume that for any $x\in\mathcal{X}$ and $z\in\mathcal{Z}$, the average number of passages of the jump chain at $x$ is greater than a constant $a>0$, i.e. $\inf_{x\in\mathcal{X},z\in\mathcal{Z}}\mathbb{E}_z[N^x]>a$. Also, assume that $\mathbb{P}(S\leq L)<1$ and there exist positive constants $D_L$ and $d_L$ such that  
\begin{eqnarray*}
\sup_{z \in \mathcal{Z}} \mathbb{E}\left[L^l\right] \leq  D_L (l!) (d_L)^l,\quad \forall l \in \mathbb{N}.    
\end{eqnarray*}
Then, for $n$ large enough, one can find a constant $\widetilde C>0$ such that for any $z\in int(\mathcal{Z})$ with $f_Z(z)>0$
    \begin{eqnarray*}
    &&\hspace{-1cm}\mathbb{E}\left[\int_0^{+\infty}\sup_{x\in\mathcal{X}}\left( f_{T,n}^{(k)}(t,x,z)- f_T(t,x,z)\right)^2dt\right]\\
    &&\leq (k+1)^{3/2}\dfrac{\widetilde{C}|\mathcal{X}|}{\lambda_z}\left[\dfrac{1+f_Z(z)}{f_Z(z)^2}h^{2\alpha}+\dfrac{|\mathcal{X}|\sup_{z\in\mathcal{Z}}\mathbb{E}_z[L^2]}{a^2(vf_Z(z)+\psi_n^z)}(nh^p)^{-1}+\exp\left(-nh^p(vf_Z(z)+\psi_n^z)\right)\right]\\
    &&+(k+1)^3\left[\sum_{x\in A}\mathbb{P}(Y_S\neq x,S\leq L)^n+\mathbb{P}(L<S)^n\right]+\dfrac{\lambda_zm_z^2r_z^{2k+1}}{2(1-r_z)}
    \end{eqnarray*}
where $z\mapsto \psi_n^z$ is a measurable function with $\sup_{z\in\mathcal{Z}}|\psi^z_n|=\mathcal{O}(h^\alpha)$ which only depends on $h$ and $f_Z$, $v$ is the volume of the unit sphere with respect to $\Vert .\Vert$ and the constants $m_z>0$ and $0< r_z<1$ are defined in Lemma \ref{lem: expo_decrease}.
\end{theorem}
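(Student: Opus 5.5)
The plan is to combine the two inequalities of Proposition~\ref{prop::density} and then bound separately the two kernel risks that appear there. The kernel estimators $\lambda_{n,z}$ and $c^{(k)}_{n,j}$ (built from $p_{n,z}$ through (\ref{temporalsa})) satisfy the hypotheses of that proposition: $p_{n,z}$ is a transition matrix, so induction gives $c^{(k)}_{n,j}\ge 0$ and $\sum_{j\le k}c^{(k)}_{n,j}(x,z)\le 1$, because the $c^{(k)}_{n,j}$ are hitting probabilities of a chain with kernel $p_{n,z}$. Inserting the second inequality into the first produces the factor $(2k-1)\lambda_z\cdot 2\cdot (k+1)^2\lesssim (k+1)^3$ in front of $\sum_{x\in A}\mathbb{P}(Y_S\neq x,S\le L)^n+\mathbb{P}(S>L)^n$. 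The factor $(\sum_x\mathbb{P}_z(S\le k-1|Y_0=x))^2\le|\mathcal{X}|^2$ is absorbed together with $\lambda_z\le\lambda_{\max}$ into $\widetilde C|\mathcal{X}|$. The tail term $\lambda_z m_z^2r_z^{2k+1}/(2(1-r_z))$ is kept unchanged. It then remains to show that both $\mathbb{E}[(\lambda_z-\lambda_{n,z})^2/(\lambda_z+\lambda_{n,z})]$ and $\mathbb{E}[\sup_{x,x'}(p_{n,z}-p_z)^2]$ are bounded by the bracket of the statement, with a $1/\lambda_z$ prefactor. The $(k+1)^{3/2}$ prefactor dominates $k$ for the $p$-term only up to a constant choice; if it does not, I would accept the $k$ factor into $\widetilde C$ after noting that the statement has $k$ fixed.

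\textbf{Denominator.} First I control the random denominator $\hat f(z)=n^{-1}\sum_i K_h(z-Z_i)$. Since $c_K\le K\le C_K$ on the unit ball, the number of $Z_i$ in the ball $B(z,h)$ is binomial with mean $n\int_{B(z,h)}f_Z = nh^p(vf_Z(z)+\psi^z_n)$. Here $\psi^z_n=\mathcal O(h^\alpha)$ follows from the Hölder condition on $f_Z$, and this is exactly where $v$ and $\psi_n^z$ come from. I would split on the event $\Omega_0$ that no $Z_i$ falls in $B(z,h)$. By independence,
\[
\mathbb{P}(\Omega_0)=(1-h^p(vf_Z(z)+\psi_n^z))^n\le \exp(-nh^p(vf_Z(z)+\psi_n^z)).
\]
On $\Omega_0$ the errors are bounded: $\lambda_{n,z}\le\tilde\lambda$ because of the truncation, which is precisely the purpose of the threshold, and the $p$'s lie in $[0,1]$. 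This gives the exponential term.

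\textbf{Bias and variance.} On $\Omega_0^c$, I write each estimator as a ratio of weighted sums. For the rate, $\mathbb{E}_z[E_M/M]=1/\lambda_z$, since given $M$ the variable $E_M$ is a sum of $M$ i.i.d. $\mathrm{Exp}(\lambda_z)$. For the transitions, the Markov property and independence of $L$ from $S$ given $Z$ give $\mathbb{E}_z[N^{x,x'}]=p_z(x,x')\mathbb{E}_z[N^x]$. I then linearize the ratio: $p_{n,z}-p_z=\sum_j\omega_{h,z}(Z_j)(N_j^{x,x'}-p_z N_j^x)\big/\sum_j\omega_{h,z}(Z_j)N^x_j$. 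The numerator splits into a bias part, bounded by $Ch^\alpha$ through the Hölder conditions on $p_z$ and $\lambda_z$, and a centred part whose conditional variance given the $Z$'s is at most $\sum_j\omega_{h,z}(Z_j)^2\,\mathbb{E}_{Z_j}[L^2]$, using $N^x\le M\le L$. The expected sum of squared weights is of order $(nh^p(vf_Z+\psi))^{-1}$, obtained by bounding $\mathbb{E}[1/\mathrm{Bin}]$-type quantities. For the rate estimator I use $|1/a-1/b|$ with $a=\sum\omega E/M$, together with $\lambda_{n,z}\le\tilde\lambda$ and the denominator $\lambda_z+\lambda_{n,z}\ge\lambda_z$, which yields the $1/\lambda_z$ prefactor. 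The bias squared terms combine to $h^{2\alpha}(1+f_Z)/f_Z^2$ after dividing by the density of the denominator.

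\textbf{Main obstacle.} The hardest step is the random denominator $\sum_j\omega_{h,z}(Z_j)N_j^x$ in $p_{n,z}$, which can be small even on $\Omega_0^c$. I would handle it by Bernstein's inequality, whose moment condition is exactly $\mathbb{E}[L^l]\le D_L\, l!\, d_L^l$ applied to $N^x\le L$. This shows that the denominator exceeds $a/2$ except on an event of probability $\exp(-c\,nh^p(vf_Z+\psi))$, which merges into the exponential term. On the complement, dividing by the denominator squared costs the factor $1/a^2$, and the union over $x$ costs $|\mathcal{X}|$, producing the middle term $|\mathcal{X}|\sup_z\mathbb{E}_z[L^2]/(a^2(vf_Z+\psi))\,(nh^p)^{-1}$. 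The assumption $\mathbb{P}(S\le L)<1$ is used only to make the terminal-set terms nontrivial and to ensure that trajectories carry transitions.
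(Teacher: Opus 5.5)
Your proposal is correct in substance. It shares the paper's architecture: Proposition~\ref{prop::density} plus bias, variance and deviation bounds for $\lambda_{n,z}$ and $p_{n,z}$. The technical execution, however, is different.

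\textbf{The paper's route.} It normalizes by the unconditional quantities $\mathbb{E}[K_h(z-Z)U]$ and works on the events $\{R^z_{h,U}>1/2,\,R^z_{h,1}<3/2\}$ and $\{R^z_{h,N^x}>1/2\}$. It controls the complements of these events with the Bernstein-based Lemma~\ref{lem:deviationRhx}; for the rate this forces it to check all moments of $E_M/M$ through Erlang moments. It bounds unconditional second moments with Lemma~\ref{lem:smallballHolderversion}. The factor $1/f_Z(z)^2$ comes from its bias bound for a ratio of expectations.

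\textbf{Your route.} You condition on the design. The variance comes from $\mathbb{E}[\mathds{1}_{\{N_h\ge 1\}}\sum_j\omega_{h,z}(Z_j)^2]\le (C_K/c_K)\,\mathbb{E}[N_h^{-1}\mathds{1}_{\{N_h\ge1\}}]\lesssim (nh^p(vf_Z(z)+\psi_n^z))^{-1}$, where $N_h$ is the binomial count in $B(z,h)$. The bias is bounded pointwise by a constant times $h^\alpha$, free of $f_Z(z)$, which is stronger than needed. The empty-ball event supplies an exponential term.

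\textbf{The rate estimator.} Your use of the truncation does more work than in the paper. Since $1/\lambda_{n,z}=\max(a,1/\tilde\lambda)$ and $1/\lambda_z\ge 1/\tilde\lambda$, one has deterministically
$|\lambda_{n,z}-\lambda_z|=\lambda_{n,z}\lambda_z\,|1/\lambda_{n,z}-1/\lambda_z|\le\tilde\lambda\lambda_z\,|a-\lambda_z^{-1}|$.
So no lower bound on $a$, and no Bernstein step, is needed for $\lambda_{n,z}$. Bernstein with the moment condition on $L$ is needed only for $\sum_j\omega_{h,z}(Z_j)N^x_j$, exactly as you say. Write this inequality explicitly: your phrase about $|1/a-1/b|$ is otherwise ambiguous, since $1/a$ itself is not bounded by $\tilde\lambda$.

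\textbf{Comparison.} The paper's route gives explicit constants through reusable lemmas. Yours is shorter and is the standard analysis of Nadaraya--Watson-type ratios.

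\textbf{Three minor points.}

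First, you need no fallback for the $k$-factor of the $p$-term: $(k+1)^{3/2}\ge 2k-1$ for every $k\ge 1$. Absorbing $k$ into $\widetilde C$ would not be legitimate anyway, because $k$ is meant to grow with $n$ (the rate discussion takes $k$ of order $n^\gamma$).

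Second, Bernstein yields $\exp(-c\,nh^p(vf_Z(z)+\psi_n^z))$ with some $c<1$ in general. This cannot be merged into $\widetilde C\exp(-nh^p(vf_Z(z)+\psi_n^z))$ by enlarging $\widetilde C$. The paper's proof has the same feature (its exponents carry the factor $1/(8(\cdots))$), so the statement has to be read with a constant in the exponent.

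Third, the identity $\mathbb{E}_z[N^{x,x'}]=p_z(x,x')\,\mathbb{E}_z[N^x]$ and the martingale structure of $N^{x,x'}-p_Z(x,x')N^x$ need more than independence of $L$ from $S$. They require $\{i\le L\}$ to be independent of $Y_i$ given $Y_0,\dots,Y_{i-1}$ and $Z$, i.e.\ $L$ independent of the whole jump chain given $Z$. The paper relies on the same tacit assumption.
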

\noindent
The proof of the above result directly follows from the Proposition \ref{prop::density} with the derivation of the quadratic errors of the aforementioned kernel-based estimators. It particularly shows that the convergence rate of our approach can be almost minimax optimal in the sense of \cite{Cha14}. The selection of series $h$ and $k$ of respective orders $n^{-\frac{1}{2\alpha+p}}$ and $n^\gamma$ with $\gamma>0$ allows the derivation of a density estimator with an average quadratic error of order $n^{\frac{3}{2}\gamma-\frac{2\alpha}{2\alpha+p}}$, which approaches the optimal order of $n^{-\frac{2\alpha}{2\alpha+p}}$ when $\gamma$ decreases. 

\section{Simulation and numerical verifications}  \label{Sec::Simulation}
\noindent
In this section, we illustrate the numerical performances of the kernel based estimator. We propose to define different examples where the random walk could be either irreductible or admit absorbing states. In those examples, we consider covariate variables that take values in $\mathcal{Z}=[0,1]$ and consider the kernel function $ K(u):= \frac{1}{2} \left( \frac{4}{3} - u^2 \right) \mathds{1}_{|u| \leq 1}$. Note that the latter function is inspired from the Epanechnikov's kernel with modified constant, in order to ensure the function bounded away from zero on its support. Although this parameter does not affect the computation, we chose an intensity upper bound of $\widetilde{\lambda}=5$. We consider $N=50$ samples of sizes $n=100,200,400,800$ under the following models
\begin{enumerate}  
    \item[a.] $\mathcal{X} := \{ 1,2,3,4,5,6\}$, $A:=\{5,6 \}$, $Z \sim \mathcal{U}([0,1])$, $L := 5+X$ with $X \sim \mathcal{P}(2)$, $\lambda_z := 1 + z$ and 
    \begin{eqnarray*}
        P^{(z)} :=  \begin{pmatrix}
    0 & \frac{z}{1 + z^2} & 0 & \frac{1 + z^2 - z}{1 + z^2} & 0 & 0 \\[.2cm]
    0 & \frac{0.5}{1.1 + z + z^2} & \frac{0.4 + z}{1.1 + z + z^2} & 0 & \frac{0.1 + z^2}{1.1 + z + z^2} & 0 \\[.2cm]
    0.4 &  \frac{0.3}{2 + z} &  \frac{0.6 + 0.6z }{2 + z} & 0 & 0 & \frac{0.3}{2 + z} \\[.2cm]
    0.3 & 0.5 & 0 & 0 & 0.2 & 0 \\[.2cm]
    0 & 0 & 0 & \frac{1+z}{3+z} & 0 & \frac{2}{3+z} \\[.2cm]
    0 & 0 & 0.3 & 0 & 0.5 & 0.2 
    \end{pmatrix} .
    \end{eqnarray*}
    \item[b.] $\mathcal{X} := \{ 1,2,3,4,5,6,7 \}$, $A:=\{6,7 \}$, $Z \sim \mathcal{B}eta(1.4,2.7)$, $L := 6+X$ with $X \sim \mathcal{P}(1)$, $\lambda_z := 0.4
    +2z^{3/4}$ and 
    \begin{eqnarray*}
        P^{(z)} :=  \begin{pmatrix}
    0 & \frac{1.5 + z}{1 + z + z^3} & \frac{0.5 + 0.3z^3}{1 + z + z^3} & 0 & 0 & \frac{0.7z^3}{1 + z + z^3} & 0 \\[.2cm]
    \frac{0.5}{(1 + z)^2} & 0 & 0 & \frac{0.3 + 1.5z}{(1 + z)^2} & \frac{0.5z + 0.5z^2}{(1 + z)^2} & 0 & \frac{0.2 + 0.5 z^2}{(1 + z)^2} \\[.2cm]
    0 & 0 &  0.2 &  \frac{0.4}{1 + z} & 0 & \frac{0.4 + 0.8z}{1 + z} & 0 \\[.2cm]
    0.1 + 0.3\sqrt{z} & 0 & 0.3 + 0.1 \sqrt{z} & 0 & 0 & 0 & 0.6 - 0.4 \sqrt{z} \\[.2cm]
    0 & 0 & 0 & 0 & 1 & 0 & 0 \\[.2cm]
    0 & 0.4 & 0 & 0 & 0  & 0 & 0 \\[.2cm]
    0 & 0 & 0 & 0 & 0.2  & 0.8 & 0
    \end{pmatrix}
    \end{eqnarray*}
\end{enumerate}
where $\mathcal{U}, \mathcal{P}$ and $\mathcal{B}eta$ respectively denote the uniform, the Poisson and the Beta distributions. The first example illustrates a case where the Markov chain is irreductible and the model functions are Lipchitz. In the second example, the fifth state is settled to be an absorbing state and the model functions are Hölder continuous with $\alpha = 0.4$. \\

\noindent
In order to obtain a qualitative measure the estimator consistency, we propose to draw the boxplots of the risk error for some fixed values of $z$, i.e.
$$ \left\{ \mathcal{I}^{m}(n,k,z) := \int_0^{+\infty}\sup_{x\in\mathcal{X}}\left( f_{T,n}^{(k)(m)}(t,x,z)- f_T(t,x,z)\right)^2dt , \, m = 1, \ldots , N  \right\}$$
where here and throughout this section, the index $m$ designates the sample number given to an estimator. Proposition \ref{prop::density} provides upper bounds for the risk error, which relies primarily on the estimation of the intensity parameter and of the coefficients $(c_j(x,z))_{j \in \mathbb{N}}$. We therefore determine which of these estimators induces the greatest volatility and leads to a larger risk error amplitude, drawing the boxplots of the estimators
$$\left\{ \frac{\lambda_{n,z}^{(m)}}{\lambda_{z}},\,m =1, \ldots , N \right\} \quad \text{and} \quad \left\{\sup_{x\in\mathcal{X},\,j=1,\ldots,k}\left(c^{(k)(m)}_{n,j}(x,z)-c_j(x,z)\right)^2,\,m =1, \ldots , N \right\}.  $$
It is worth-noting that the true density function is represented as an infinite sum of preponderated Erlang distributions. Only partial sums of this function can consequently be numerically computed. However, the machine error for values of smaller order than $10^{-16}$ do not allow to consider partial sums of order greater that 135. We consequently restrain the approximation of the true density function by up to $k=130$ iterations for all the estimation procedures. \\

The choice of the parameter $h$ can be cumbersome in numerical computations of kernel-based estimators. In our experiments, it turned out that poor estimations of the intensity parameters $\lambda_z$ were causing a greater impact on the risk error value than the estimation of the coefficients $\{ c_j(x,z) , x \in \mathcal{X} , z \in \mathcal{Z} \}.$ Therefore, we propose basing the choice of bandwidth on minimizing the volatility of the intensity estimation, which is given as follows. Given $z \in \mathcal{Z}$, we define the estimator of the multiplicative inverse of the intensity parameter $\lambda_z^{-1}$ as $$ \widehat{m}_h(z) 
:= \frac{1}{\widehat{\lambda}_h(z)} = \frac{\sum_{i=1}^n K_h(Z_i - z)\, \frac{E^{(i)}_{M_i}}{M_i}}
       {\sum_{i=1}^n K_h(Z_i - z)} $$ 
so that for any $j \in \{ 1 , \ldots , n \}$, $\widehat{m}_{h,-j}(z)$ refers to the above estimator when the $j$-th element of the sample is removed. Bandwidth candidates are then selected by minimizing the following conditional predictive error
\begin{eqnarray*}
    CPE(h) = \sum_{j=1}^n \left( \frac{E^{(j)}_{M_j}}{M_j} - \widehat{m}_{h,-j}(z)  \right)^2 .
\end{eqnarray*}
The chosen bandwidth is selected using a tenfold sampling procedure. Formally, we consider ten sub-samples $(A_{n,\ell})_{\ell=1,\ldots,10}$, where $A_{n,\ell}$ regroups all but the elements of indexes in $\{ 10(\ell-1) +1 , \ldots , 10 \ell \}$. This allows to introduce $h_\ell$ as the parameter minimizing the conditional predictive error for the sample $A_{n,\ell}$. The selected bandwidth then represents the average value of the aforementioned candidates with $h := \frac{1}{10} \sum_{\ell =1}^{10} h_{\ell}$.\\

\noindent
Boxplots from Figures \ref{fig::risk_dens_a}-\ref{fig::risk_coeff_b} illustrate the performances of the kernel based estimator. For any boxplot illustration and any covariate value, positions one through four designate the boxes from left to right, respectively

\begin{figure}[H]
    \centering
    \includegraphics[scale=0.4]{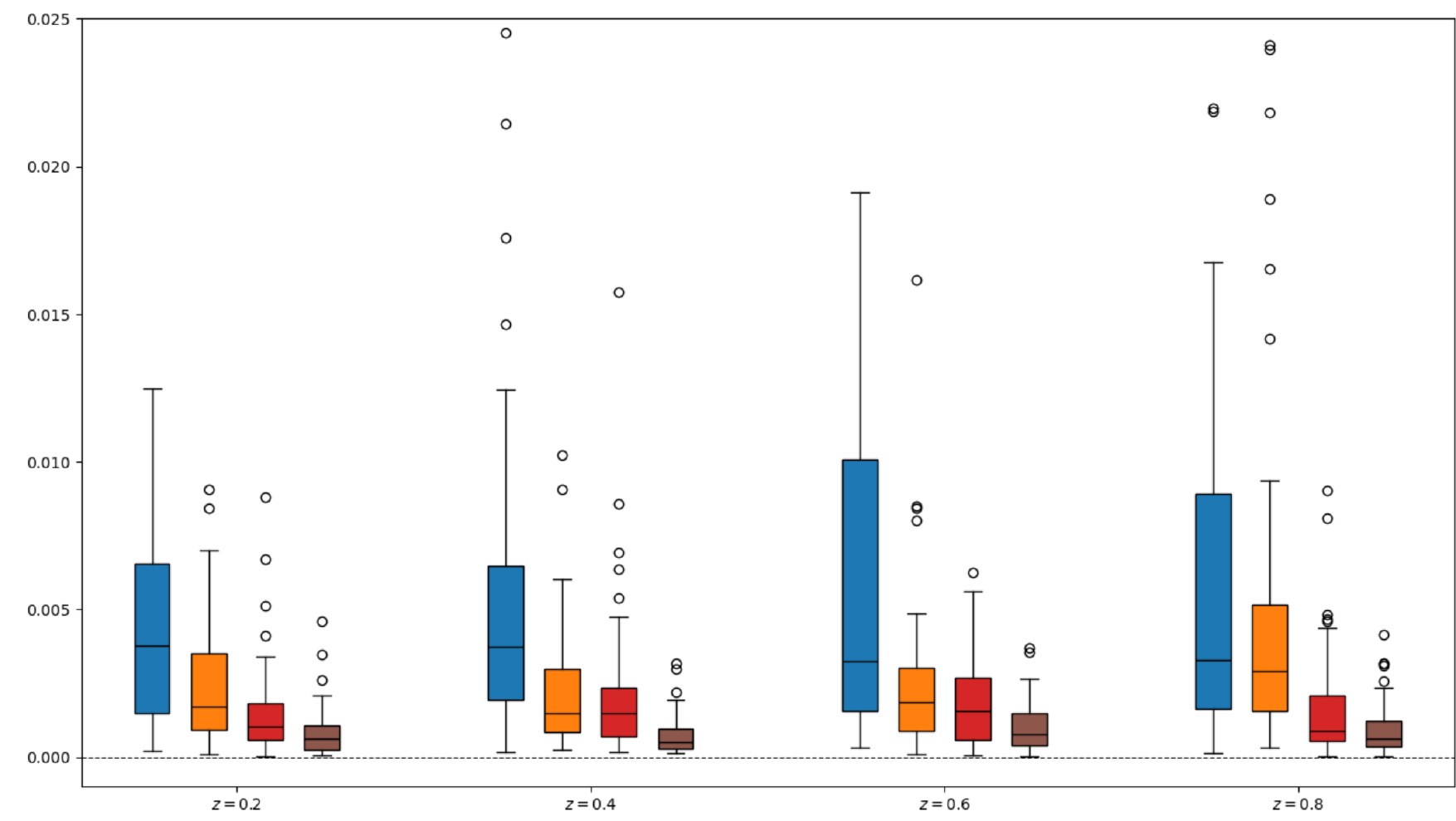}
    \caption{Model a - Risk error of the survival density estimator for $z=0.2,0.4,0.6,0.8$, for $n=100$ (position 1), 200 (position 2), 400 (position 3) and 800 (position 4).}
    \label{fig::risk_dens_a}
\end{figure}

\begin{figure}[H]
    \centering
    \includegraphics[scale=0.4]{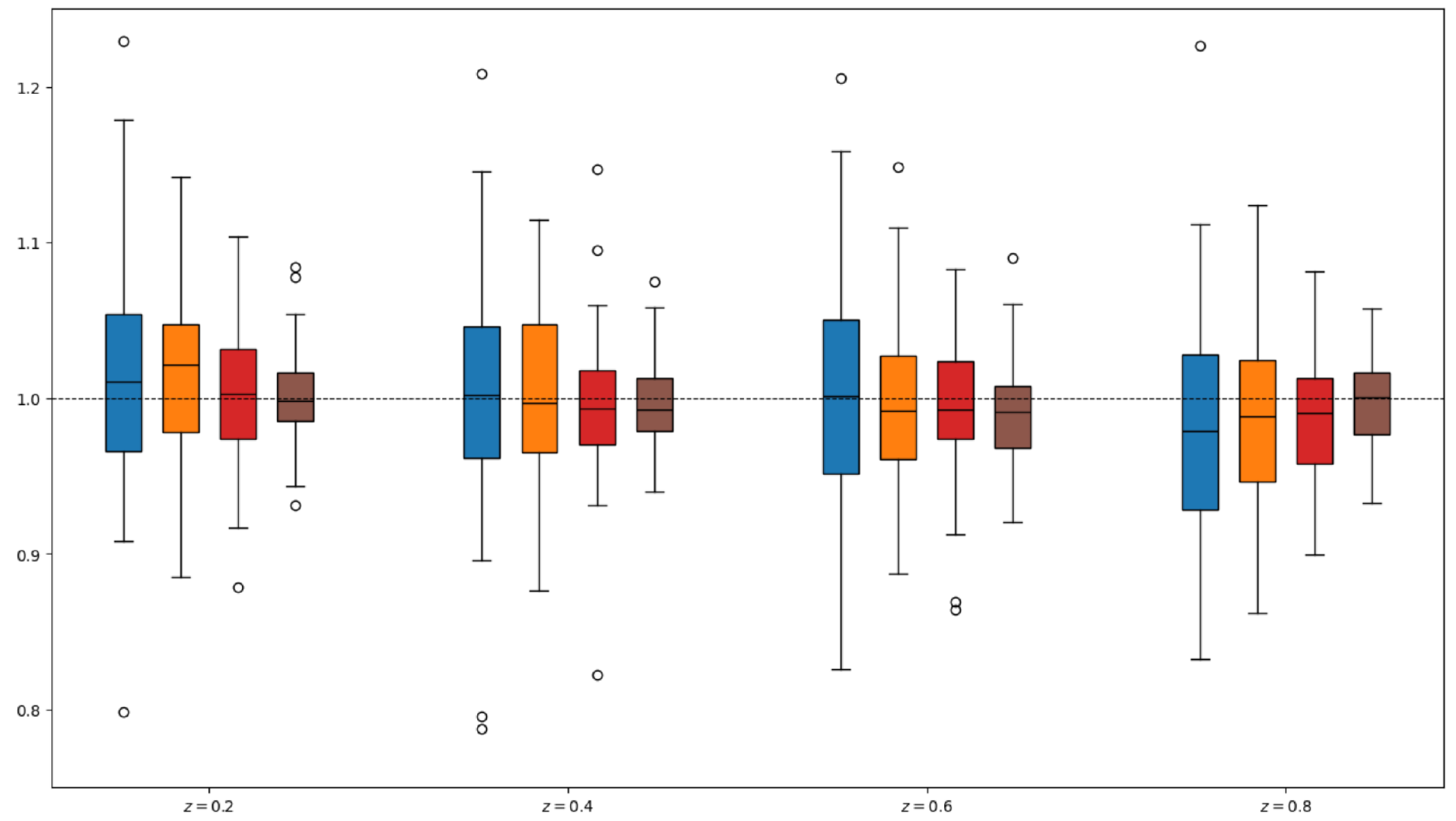}
    \caption{Model a - Normalized estimator of the intensity parameter for $z=0.2,0.4,0.6,0.8$, for $n=100$ (position 1), 200 (position 2), 400 (position 3) and 800 (position 4).}
    \label{fig::int_a}
\end{figure}

\begin{figure}[H]
    \centering
    \includegraphics[scale=0.4]{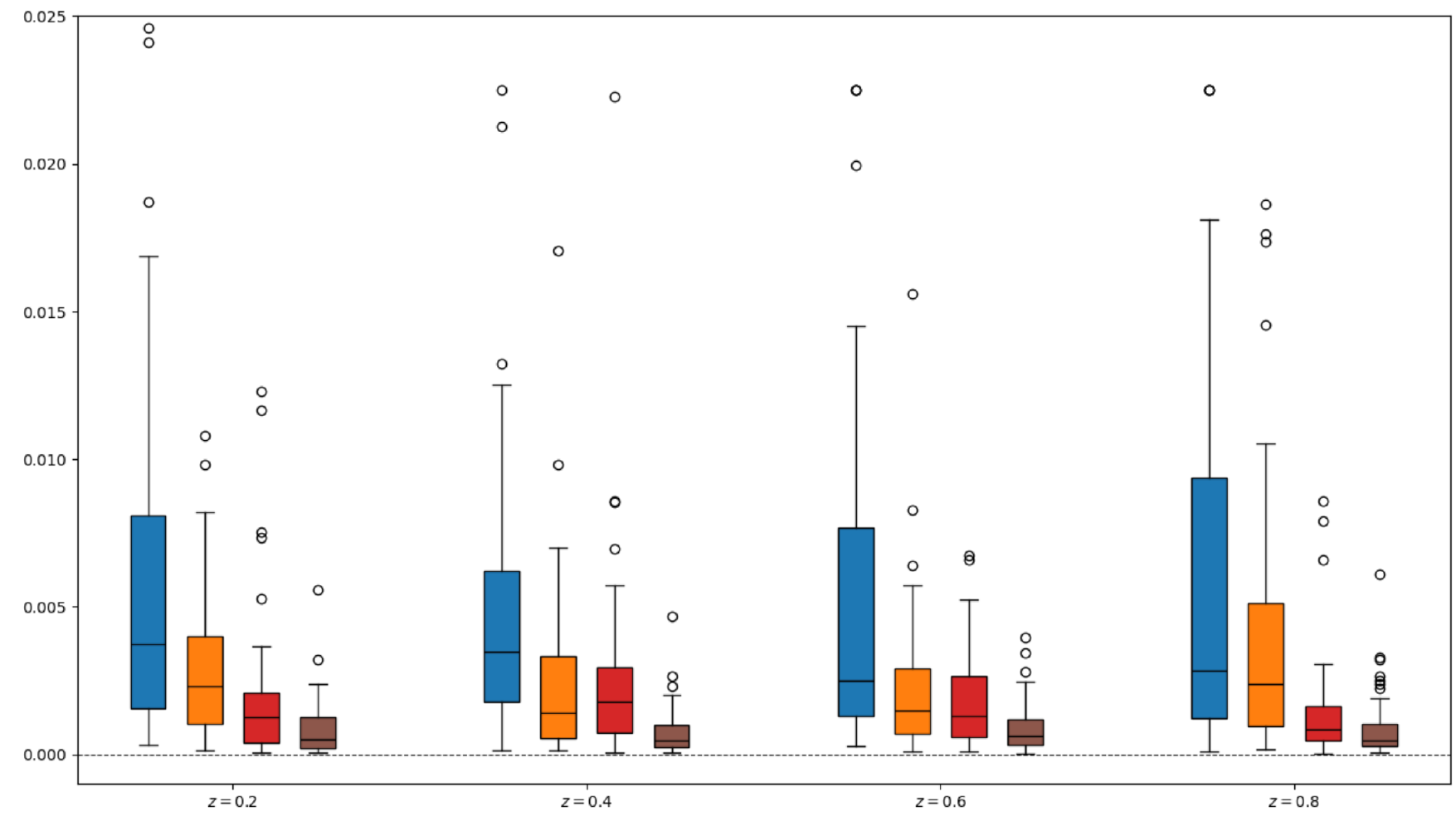}
    \caption{Model a - Risk error of the maximum of the survival coefficient for $z=0.2,0.4,0.6,0.8$, for $n=100$ (position 1), 200 (position 2), 400 (position 3) and 800 (position 4).}
    \label{fig::risk_coeff_a}
\end{figure}

\begin{figure}[H]
    \centering
    \includegraphics[scale=0.4]{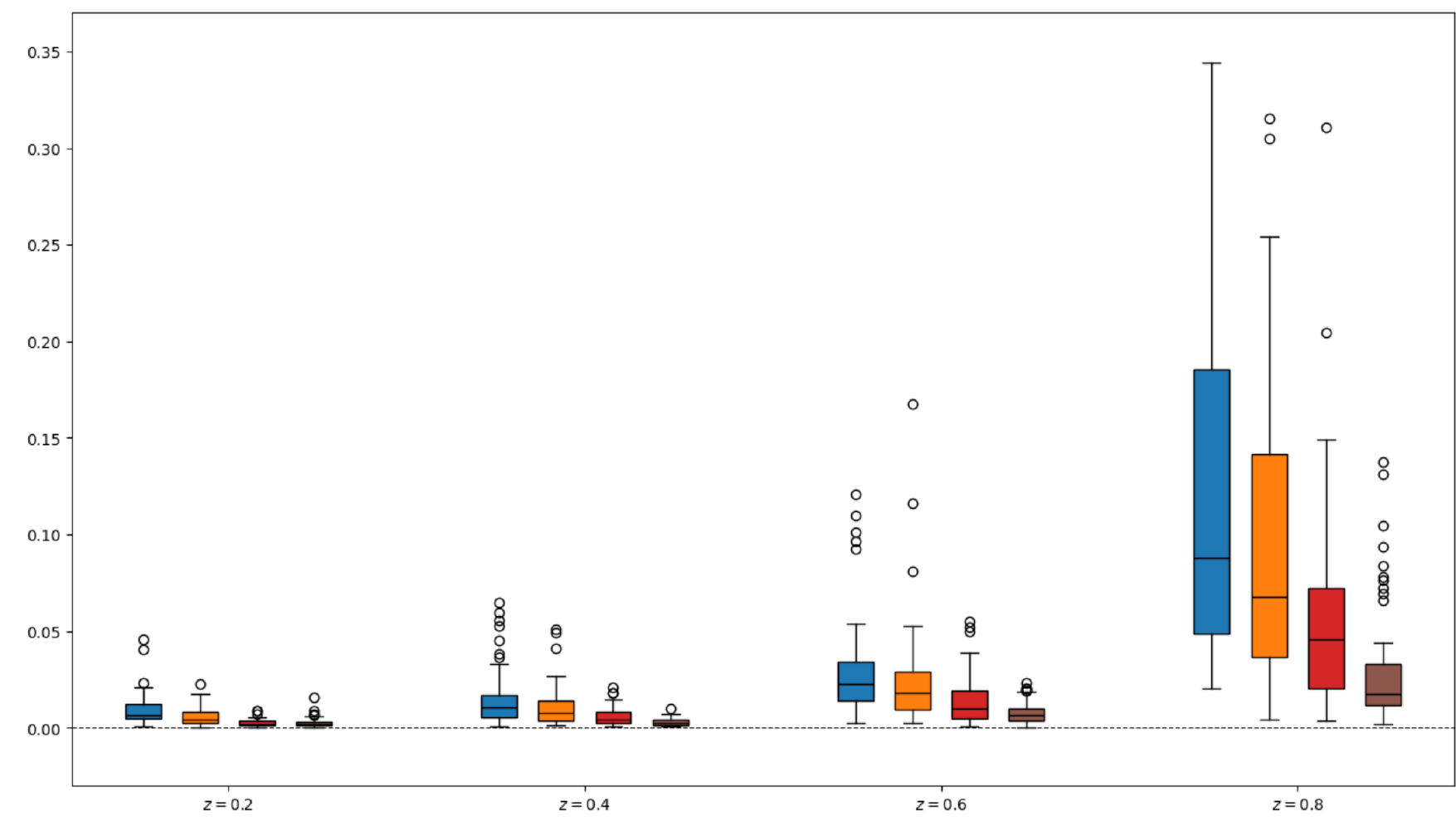}
    \caption{Model b - Risk error of the survival density estimator for $z=0.2,0.4,0.6,0.8$, for $n=100$ (position 1), 200 (position 2), 400 (position 3) and 800 (position 4).}
    \label{fig::risk_dens_b}
\end{figure}

\begin{figure}[H]
    \centering
    \includegraphics[scale=0.4]{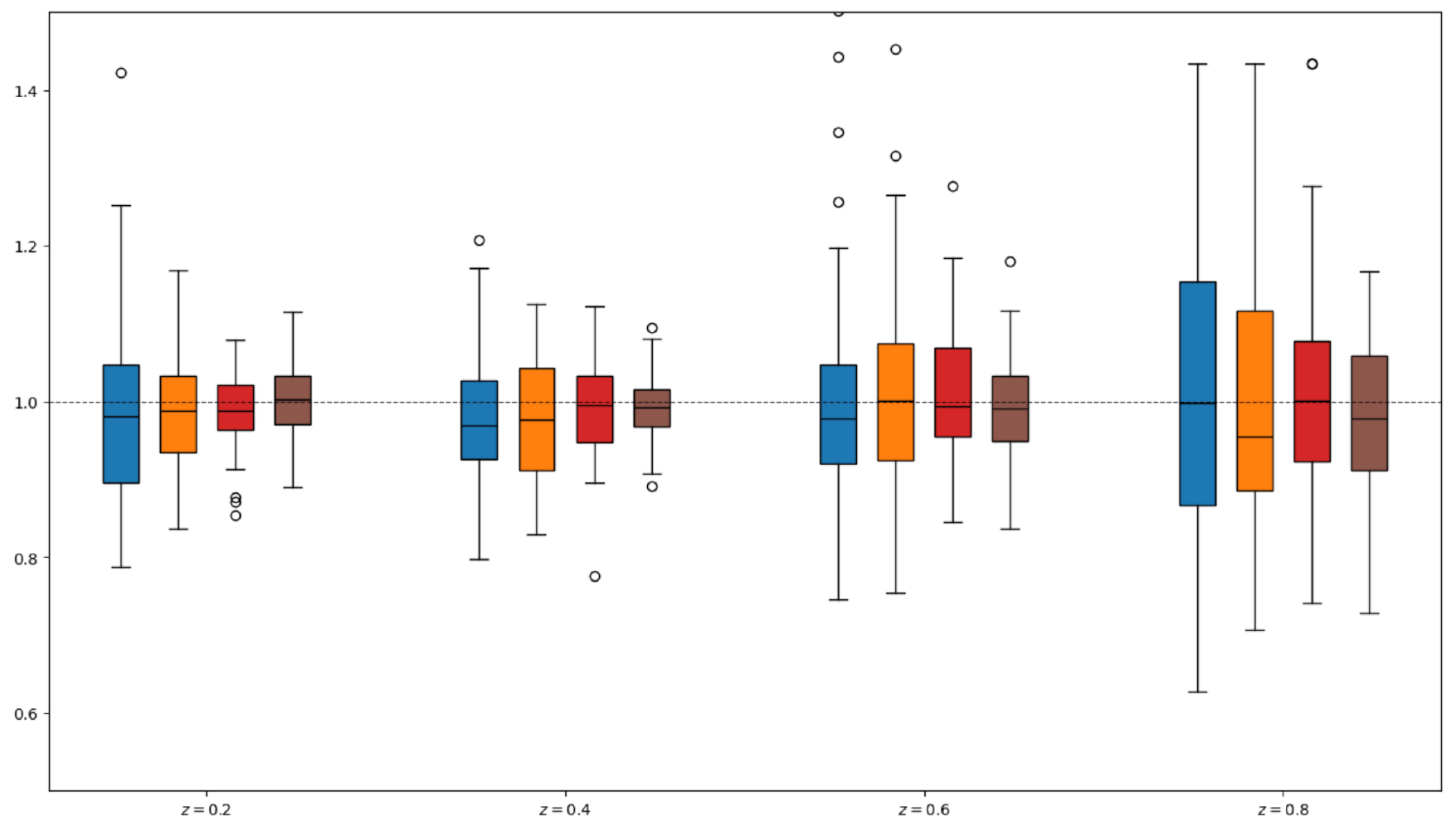}
    \caption{Model b - Normalized estimator of the intensity parameter for $z=0.2,0.4,0.6,0.8$, for $n=100$ (position 1), 200 (position 2), 400 (position 3) and 800 (position 4).}
    \label{fig::int_b}
\end{figure}

\begin{figure}[H]
    \centering
    \includegraphics[scale=0.4]{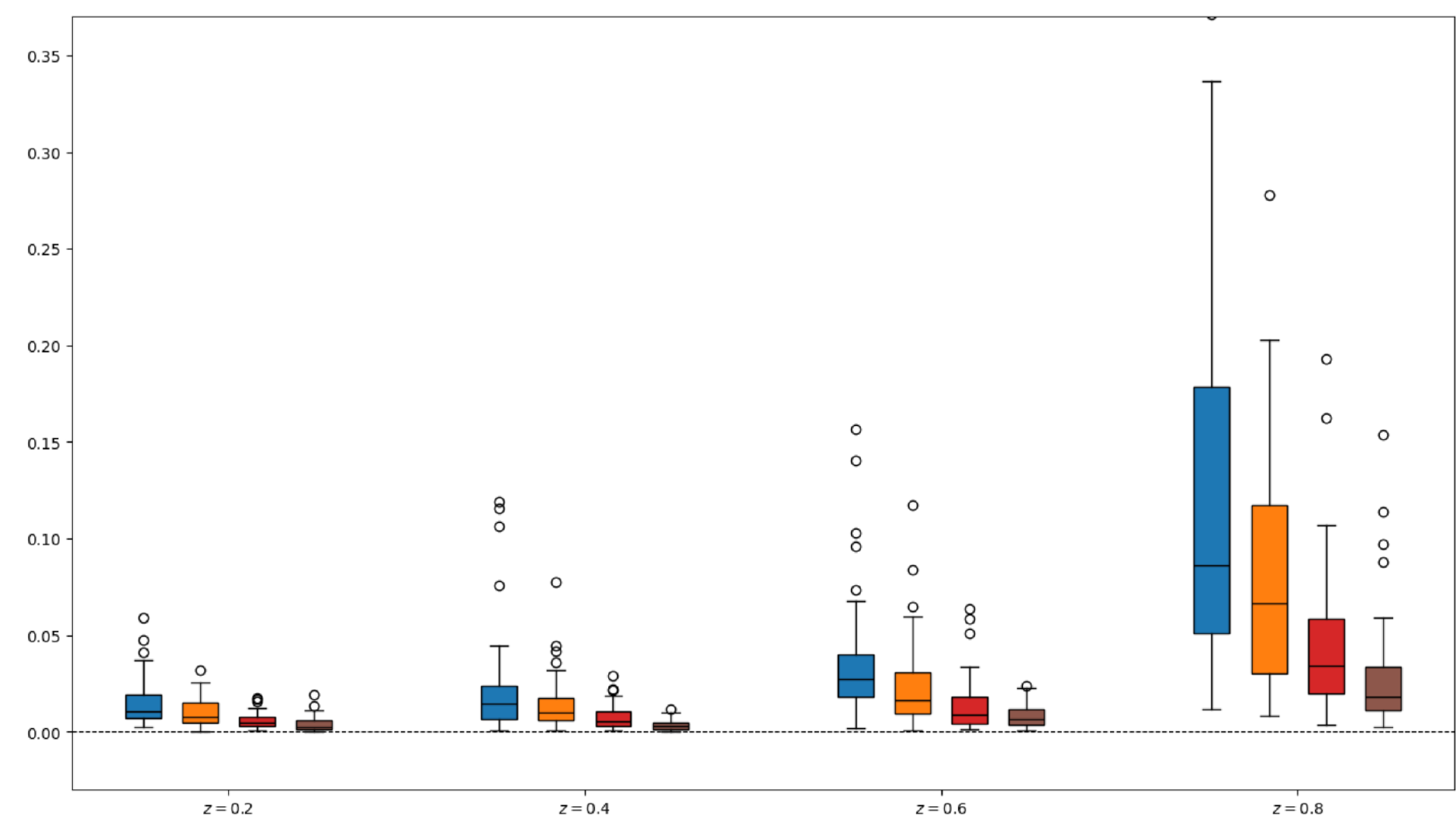}
    \caption{Model b - Risk error of the maximum of the survival coefficient for $z=0.2,0.4,0.6,0.8$, for $n=100$ (position 1), 200 (position 2), 400 (position 3) and 800 (position 4).}
    \label{fig::risk_coeff_b}
\end{figure}

\noindent
The results obtained for each of the models differ significantly. Although the estimator of the first model performs well, even for small sample sizes and independently of the covariate, the second model illustrates a greater difficulty in estimating the intensity parameter and the true density for values of $z$ close to 1. As explained at the beginning of this section, the density function of $Z$ offers less regularity in the second case than in the first one, suggesting slower rates of convergence. Moreover, it is worth mentioning that the $0.98$ (resp. $0.95$) quantile is approximately equal to $0.808$ (resp. $0.728$) when $Z \sim \mathcal{B}eta(1.4,2.7)$. As expected, it was more difficult to estimate the true density function in the second model for values of z near 1, since kernel-based estimation methods typically perform poorly in the edges of the support, where observations are usually sparse. Finally, whether model a. or model b. is considered, the risk error values decrease significantly as the sample size increases.

\section{Bone marrow transplant}
In this section, we illustrate our method on a dataset compiled from the Center for International Blood and Marrow Transplant Research and studied in \cite{And08}. The patient cohort includes $n=2009$ individuals who have received a marrow transplant between 1995 and 2004 from a sibling for acute myelogenous leukemia (AML) or acute lymphoblastic leukemia (ALL). Follow-up with each patient allows us to construct their clinical trajectory after bone marrow or blood stem cell transplantation (state 0). Among patients who experienced death (state 1), three main competing events are considered: relapse (state 2), donor rejection of the transplant (state 3) and the moment when the absolute neutrophil count (ANC) exceeds above 500 cells per $\mu L$ (state 4), representing a higher risk of infection. Figure \ref{fig::states} resumes the configuration of the states with the number of transitions and Table \ref{tab::data} presents the covariates we considered for each patient.

\begin{figure}[h]
\centering
\begin{tikzpicture}
    \node[state] (3) {3};
    \node[state, initial] (0) [left=2cm of 3] {0};
    \node[state, above=of 3] (2) {2};
    \node[state, below =of 3] (4) {4};
    \node[state, accepting, right=2cm of 3](1) {1};

\path[->] 
    (0) edge node[above] {199} (3)
    (0) edge[bend right] node[below left] {1771} (4)
    (0) edge[bend left=90,looseness=1.5] node[above] {39} (1)
    (2) edge[bend left] node[above right] {228} (1)
    (3) edge node[right] {66} (2)
    (3) edge node[above] {239} (1)
    (3) edge[bend left] node[right] {192} (4)
    (4) edge[bend left] node[left] {775} (3)
    (4) edge[bend left=80, looseness=1.5] node[above, xshift=.2cm, yshift=1.2cm] {188} (2)
    (4) edge[bend right] node[below right] {234} (1);    
\end{tikzpicture}
\caption{States and number of transitions.}
\label{fig::states}
\end{figure}

\begin{table}[h]
\centering
\begin{tabular}{r l}
    \texttt{sex}: & 0 = female, 1 = male.\\
    \texttt{age}: & age in years.\\
    \texttt{all}: & 0 = AML, 1 = ALL.\\
    \texttt{bmonly}: & 0 = peripheral blood/bone marrow, 1 = only bone marrow.
\end{tabular}
\caption{Covariate description.}
\label{tab::data}
\end{table}

\noindent
Our approach for this application is based on a twofold likelihood estimation of the transition probabilities and the intensities. Because the covariates are a combination of categorical and quantitative information, we consider neural networks as proper candidates for the conditional functions. Each network is based on 10 hidden layers with 1024 nodes, ReLU activation function and optimized throughout a set of parameter weights $\omega\in\Omega$. The networks intended for the transition probabilities are denoted by $L_{\omega}$ and map an initial position $x$ and a covariate $z\in\mathcal{Z}$ to the unit simplex $\Delta^{|\mathcal{X}|}$. Similarly, networks denoted $\widetilde{L}_\omega$ are designed for the intensity function, take values in $\mathcal{Z}$ and return a positive real number. The sets of the neural network parameters are trained by gradient descent using the Adam optimizer \citep{kingma2014adam} for $n_{\textit{iter}} = 1000$ epochs to maximize the following log-likelihood 
\begin{eqnarray*}
    \mathcal{L}_p(\omega)=\sum_{j=1}^n\sum_{i=1}^{M_j}\log\left(L_\omega(Y_{i-1}^{(j)},Z_j)_{Y_i^{(j)}}\right)
\quad \text{and}\quad\mathcal{L}_\lambda(\omega)=\sum_{j=1}^nM_j\log\left(\widetilde{L}_\omega(Z_j)\right)-\widetilde{L}_\omega(Z_j)E^{(j)}_{M_j}
\end{eqnarray*}
and obtain the estimators $p_{n,z}(x,x')=L_{\omega_1}(x,z)_{x'}$ and $\lambda_{n,z}=\widetilde{L}_{\omega_2}(z)$ where $\omega_1=\arg\max_{\omega\in\Omega} \mathcal{L}_p(\omega)$ and $\quad\omega_2=\arg\max_{\omega\in\Omega} \mathcal{L}_\lambda(\omega)$. Our procedure is then applied with $k=25$ iterations, resulting in density and survival functions for all covariate configurations where part of the results are presented in Figure \ref{fig::curves1} and \ref{fig::curves2}.\\ 

\noindent
Overall, it appears that death occurs more quickly after surgery than relapse, which is expected based on the path configuration of the state space since relapse can only result in death. However, comparison of the blue curves suggests that younger patients are likely to survive for longer. Furthermore, there appears to be no discrepancy between young patients who have experienced transplant rejection or signs of infection. On the contrary, old patients suffering transplant rejection are more likely to die faster than those experiencing infection.

\begin{figure}[H]
    \centering
    \includegraphics[width=0.7\linewidth,page=13]{plots.pdf}
    \caption{Survival and density estimations as functions of time (months) for male patients with 20 years old, acute myelogenous leukemia and peripheral blood/bone marrow. }
    \label{fig::curves1}
\end{figure}
\begin{figure}[H]
    \centering
    \includegraphics[width=0.7\linewidth,page=21]{plots.pdf}
    \caption{Survival and density estimations as functions of time (months) for male patients with 60 years old, acute myelogenous leukemia and peripheral blood/bone marrow. }
    \label{fig::curves2}
\end{figure}

\bibliographystyle{abbrv}
\bibliography{bibli}

\begin{thebibliography}{10}

\bibitem{Aal78}
O.~O. Aalen and S.~Johansen.
\newblock An empirical transition matrix for non-homogeneous {M}arkov chains
  based on censored observations.
\newblock {\em Scandinavian Journal of Statistics}, 5(3):141--150, 1978.

\bibitem{Ami18}
M.~Amico and I.~Van~Keilegom.
\newblock Cure models in survival analysis.
\newblock {\em Annu. Rev. Stat. Appl.}, 5(1):311--342, Mar. 2018.

\bibitem{And02}
P.~K. Andersen, S.~Z. Abildstrom, and S.~Rosthøj.
\newblock Competing risks as a multi-state model.
\newblock volume~11, pages 203--215, 2002.

\bibitem{And02b}
P.~K. Andersen and N.~Keiding.
\newblock Multi-state models for event history analysis.
\newblock volume~11, pages 91--115, 2002.

\bibitem{And08}
P.~K. Andersen and M.~Pohar~Perme.
\newblock Inference for outcome probabilities in multi-state models.
\newblock {\em Lifetime Data Anal.}, 14(4):405--431, 2008.

\bibitem{And22}
P.~K. Andersen, E.~N.~S. Wandall, and M.~Pohar~Perme.
\newblock Inference for transition probabilities in non-{M}arkov multi-state
  models.
\newblock {\em Lifetime Data Analysis}, 28(4):585--604, Oct 2022.

\bibitem{Asa15}
{\"O}.~Asar, J.~Ritchie, P.~A. Kalra, and P.~J. Diggle.
\newblock Joint modelling of repeated measurement and time-to-event data: an
  introductory tutorial.
\newblock {\em Int. J. Epidemiol.}, 44(1):334--344, Feb. 2015.

\bibitem{Mas98}
L.~Birgé and P.~Massart.
\newblock Minimum contrast estimators on sieves: Exponential bounds and rates
  of convergence.
\newblock {\em Bernoulli}, 4(3):329--375, 1998.

\bibitem{Cha14}
G.~Chagny and A.~Roche.
\newblock Adaptive and minimax estimation of the cumulative distribution
  function given a functional covariate.
\newblock {\em Electron. J. Stat.}, 8(2):2352--2404, 2014.

\bibitem{Chi68}
C.~L. Chiang.
\newblock {\em Introduction to Stochastic Processes in Biostatistics},
  volume~12.
\newblock 1968.

\bibitem{Una15}
J.~de~Uña-Álvarez and L.~Meira-Machado.
\newblock Nonparametric estimation of transition probabilities in the
  non-{M}arkov illness-death model: A comparative study.
\newblock {\em Biometrics}, 71(2):364--375, 2015.

\bibitem{Egea25}
M.~Eg\'ea and M.~Escobar-Bach.
\newblock Local differential privacy in survival analysis using private failure
  indicators.
\newblock {\em Electron. J. Stat.}, 19(1):2603--2636, 2025.

\bibitem{Fry95}
H.~Frydman.
\newblock Nonparametric estimation of a {M}arkov `illness-death' process from
  interval- censored observations, with application to diabetes survival data.
\newblock {\em Biometrika}, 82(4):773--789, 1995.

\bibitem{Hou99}
P.~Hougaard.
\newblock Multi-state models: A review.
\newblock {\em Lifetime Data Analysis}, 5(3):239--264, Sep 1999.

\bibitem{Hou00}
P.~Hougaard.
\newblock {\em Analysis of Multivariate Survival Data}, volume 542.
\newblock 01 2000.

\bibitem{Jol02}
P.~Joly, D.~Commenges, C.~Helmer, and L.~Letenneur.
\newblock A penalized likelihood approach for an illness-death model with
  interval-censored data: application to age-specific incidence of dementia.
\newblock {\em Biostatistics}, 3(3):433--443, Sept. 2002.

\bibitem{Kay86}
R.~Kay.
\newblock A {M}arkov model for analysing cancer markers and disease states in
  survival studies.
\newblock {\em Biometrics}, 42(4):855--865, 1986.

\bibitem{Ken25}
E.~B. Kendall, J.~P. Williams, G.~H. Hermansen, F.~Bois, and V.~H. Thanh.
\newblock Beyond time-homogeneity for continuous-time multistate {M}arkov
  models.
\newblock {\em Journal of Computational and Graphical Statistics},
  34(2):668--682, 2025.

\bibitem{kingma2014adam}
D.~P. Kingma and J.~Ba.
\newblock Adam: A method for stochastic optimization.
\newblock In {\em International Conference on Learning Representations (ICLR)},
  2015.

\bibitem{Li23}
W.~Li, L.~Li, and B.~C. Astor.
\newblock A comparison of two approaches to dynamic prediction: Joint modeling
  and landmark modeling.
\newblock {\em Stat. Med.}, 42(13):2101--2115, June 2023.

\bibitem{Lin22}
J.~Lin and S.~Luo.
\newblock Deep learning for the dynamic prediction of multivariate longitudinal
  and survival data.
\newblock {\em Stat. Med.}, 41(15):2894--2907, July 2022.

\bibitem{Lou17}
W.~Lou, L.~Wan, E.~L. Abner, D.~W. Fardo, H.~H. Dodge, and R.~J. Kryscio.
\newblock Multi-state models and missing covariate data:
  {Expectation-Maximization} algorithm for likelihood estimation.
\newblock {\em Biostat. Epidemiol.}, 1(1):20--35, Apr. 2017.

\bibitem{Mac21}
R.~J. Machado, A.~{van den Hout}, and G.~Marra.
\newblock Penalised maximum likelihood estimation in multi-state models for
  interval-censored data.
\newblock {\em Computational Statistics \& Data Analysis}, 153:107057, 2021.

\bibitem{Mal21}
N.~Maltzahn, R.~Hoff, O.~O. Aalen, I.~S. Mehlum, H.~Putter, and J.~M. Gran.
\newblock A hybrid landmark {A}alen-{J}ohansen estimator for transition
  probabilities in partially non-{M}arkov multi-state models.
\newblock {\em Lifetime Data Analysis}, 27(4):737--760, Oct 2021.

\bibitem{May22}
L.~Maystre and D.~Russo.
\newblock Temporally-consistent survival analysis.
\newblock In S.~Koyejo, S.~Mohamed, A.~Agarwal, D.~Belgrave, K.~Cho, and A.~Oh,
  editors, {\em Advances in Neural Information Processing Systems}, volume~35,
  pages 10671--10683. Curran Associates, Inc., 2022.

\bibitem{Mei06}
L.~Meira-Machado, J.~de~U{\~{n}}a-{\'A}lvarez, and C.~Cadarso-Su{\'a}rez.
\newblock Nonparametric estimation of transition probabilities in a
  non-{M}arkov illness--death model.
\newblock {\em Lifetime Data Analysis}, 12(3):325--344, Sep 2006.

\bibitem{Mei09}
L.~Meira-Machado, J.~de~U{\~n}a-Alvarez, C.~Cadarso-Su{\'a}rez, and P.~K.
  Andersen.
\newblock Multi-state models for the analysis of time-to-event data.
\newblock {\em Stat. Methods Med. Res.}, 18(2):195--222, Apr. 2009.

\bibitem{Pep91}
M.~S. Pepe.
\newblock Inference for events with dependent risks in multiple endpoint
  studies.
\newblock {\em Journal of the American Statistical Association},
  86(415):770--778, 1991.

\bibitem{Per01}
R.~P{\'e}rez-Oc{\'o}n, J.~E. Ruiz-Castro, and M.~L. G{\'a}miz-P{\'e}rez.
\newblock A piecewise {M}arkov process for analysing survival from breast
  cancer in different risk groups.
\newblock {\em Statistics in Medcine}, 20(1):109--122, January 2001.

\bibitem{Put02}
H.~Putter, M.~Fiocco, and R.~B. Geskus.
\newblock Tutorial in biostatistics: competing risks and multi-state models.
\newblock volume~26, pages 2389--2430, 2007.

\bibitem{Put17}
H.~Putter and H.~C. van Houwelingen.
\newblock Understanding landmarking and its relation with time-dependent cox
  regression.
\newblock {\em Stat. Biosci.}, 9(2):489--503, 2017.

\bibitem{Put22}
H.~Putter and H.~C. van Houwelingen.
\newblock Landmarking 2.0: Bridging the gap between joint models and
  landmarking.
\newblock {\em Stat. Med.}, 41(11):1901--1917, May 2022.

\bibitem{Riz11}
D.~Rizopoulos.
\newblock Dynamic predictions and prospective accuracy in joint models for
  longitudinal and time-to-event data.
\newblock {\em Biometrics}, 67(3):819--829, 2011.

\bibitem{Riz17}
D.~Rizopoulos, G.~Molenberghs, and E.~M. E.~H. Lesaffre.
\newblock Dynamic predictions with time-dependent covariates in survival
  analysis using joint modeling and landmarking.
\newblock {\em Biom J}, 59(6):1261--1276, Aug. 2017.

\bibitem{Str98}
D.~Strauss and R.~Shavelle.
\newblock An extended {K}aplan–{M}eier estimator and its applications.
\newblock {\em Statistics in Medicine}, 17(9):971--982, 1998.

\bibitem{Sur17}
K.~Suresh, J.~M.~G. Taylor, D.~E. Spratt, S.~Daignault, and A.~Tsodikov.
\newblock Comparison of joint modeling and landmarking for dynamic prediction
  under an illness-death model.
\newblock {\em Biom. J.}, 59(6):1277--1300, Nov. 2017.

\bibitem{Tit11}
A.~C. Titman.
\newblock Flexible nonhomogeneous {M}arkov models for panel observed data.
\newblock {\em Biometrics}, 67(3):780--787, 2011.

\bibitem{Tsi04}
A.~A. Tsiatis and M.~Davidian.
\newblock Joint modeling of longitudinal and time-to-event data: An overview.
\newblock {\em Statistica Sinica}, 14(3):809--834, 2004.

\bibitem{Hou07}
H.~C. van Houwelingen.
\newblock Dynamic prediction by landmarking in event history analysis.
\newblock {\em Scandinavian Journal of Statistics}, 34(1):70--85, 2007.

\bibitem{Tsi97}
M.~S. Wulfsohn and A.~A. Tsiatis.
\newblock A joint model for survival and longitudinal data measured with error.
\newblock {\em Biometrics}, 53(1):330--339, 1997.

\end{thebibliography}

\section{Proof}  \label{Sec::Proof}

\subsection{Proof of Lemma \ref{lem: expo_decrease}}
This proof is based on the set $B_z$ defined in subsection \ref{subsec:: absorbcure}. The proof is direct if $x\notin B_z$:  for any $j\in\mathbb{N}$, if $x\in A$ then $c_j(x,z)=\bold{1}_{\{j=0\}}$, and if $x\in C_z$ then $c_j(x,z)=0$. We thus only consider starting position $x$ with finite path to the subset $A\cup C_z$. If $x\in B_z$, we denote $\ell_z(x)$ the smallest path length from $x$ to the subset $A\cup C_z$. Since $\mathcal{X}$ is finite, we have $d_z := \max_{x \in \mathcal{X}} \ell_z (x)<+\infty$ so that
\begin{eqnarray*}
    \varepsilon_z := \min_{ x \in B_z } \mathbb{P}_z \left( \exists j \in \{1, \ldots , d_z \} , Y_j \in A\cup C_z \;  | \; Y_0 = x \right)>0.
\end{eqnarray*}

\noindent
We introduce $S'$ as the first hitting time of the chain to the set $C_z$, so that $\widetilde{S}:=S\wedge S'$ is a stopping time returning the first time the chain reaches $A\cup C_z$. Because the state space is finite, we ensure that $S\wedge S'<+\infty$ a.s and we prove by induction that $\forall x \in B_z$ and $k \in \mathbb{N}$
$$ \mathbb{P}_z \left( \widetilde{S}> kd_z \;  | \;   Y_0 = x   \right) \leq (1 - \varepsilon_z)^k .$$
The above inequality is trivial for $k=1$ and only remain to prove the induction step. We assume the inequality holds up to some $k \in \mathbb{N}^*$. Hence, we have by the Markov property
\begin{eqnarray*}
    &&\mathbb{P}_z\left( \widetilde{S}> (k+1)d_z \; | \;   Y_0 = x  \right)\\ 
    &=&\mathbb{P}_z\left(  \widetilde{S}> kd_z \;  |  \;  Y_0 = x   \right)\sum_{x'\in B_z} \mathbb{P}_z \left( \widetilde{S}> (k+1)d_z \;  |  \; \widetilde{S}> kd_z, Y_{kd_z}=x',  Y_0 = x\right) \mathbb{P}_z\left( Y_{kd}=x'\;  |  \;\widetilde{S}> kd_z,  Y_0 = x   \right)\\
    &=&\mathbb{P}_z\left(  \widetilde{S}> kd_z \;  |  \;  Y_0 = x   \right)\sum_{x'\in B_z} \mathbb{P}_z \left( \widetilde{S}>d_z \;  |  \; Y_0 = x'\right) \mathbb{P}_z\left( Y_{kd}=x'\;  |  \;\widetilde{S}> kd_z,  Y_0 = x   \right)\\
    &\leq& (1 - \varepsilon_z)^{k+1}\sum_{x'\in B_z}\left( Y_{kd_z}=x' \;  |  \; \widetilde{S}> kd_z, Y_0 = x   \right)\\
    &\leq& (1 - \varepsilon_z)^{k+1}.
\end{eqnarray*}
Let $j\in\mathbb{N}$ and consider the Euclidean division of $j$ by $d_z$, i.e. $j=kd_z+r$ with $(k,r) \in \mathbb{N} \times \{ 0, \ldots , d_z-1 \}$. Then, it turns out that 
\begin{eqnarray*}
    c_j (x,z)=\mathbb{P}_z\left(S=j,S'>j\; | \;   Y_0 = x \right) \leq \mathbb{P}_z \left(\widetilde{S}> j-1 \; | \;   Y_0 = x \right) \leq (1 - \varepsilon_z)^{\frac{j -1}{d_z} -1} = m_z r_z^j,
\end{eqnarray*}
with $m_z := (1 - \varepsilon_z)^{-\frac{d_z+1}{d_z}}$ and $r_z := (1 - \varepsilon_z)^{\frac{1}{d_z}}$.

\subsection{Proof of Proposition \ref{prop::density}}    

The discrepancy between the density function and its empirical counterpart allows us to have, for any $t\geq 0$, $x\in\mathcal{X}$ and $z\in\mathcal{Z}$
    \begin{eqnarray*}
            &&f_{T,n}^{(k)}(t,x,z)-f_T(t,x,z)\\
            &=&\sum_{j=1}^kc_{n,j}^{(k)}(x,z)\dfrac{\lambda_{n,z}^jt^{j-1}e^{-\lambda_{n,z}t}}{(j-1)!}-c_j(x,z)\dfrac{\lambda_z^jt^{j-1}e^{-\lambda_zt}}{(j-1)!}-\sum_{j\geq k+1}c_j(x,z)\dfrac{\lambda_z^jt^{j-1}e^{-\lambda_zt}}{(j-1)!}\\
            &=&\sum_{j=1}^kc_{n,j}^{(k)}(x,z) \dfrac{t^{j-1}}{(j-1)!}\left(\lambda_{n,z}^je^{-\lambda_{n,z}t}-\lambda_z^je^{-\lambda_zt}\right)+\sum_{j=1}^k\dfrac{\lambda_{z}^jt^{j-1}e^{-\lambda_{z}t}}{(j-1)!}\left(c_{n,j}^{(k)}(x,z)-c_j(x,z)\right)\\
            &&-\sum_{j\geq k+1}c_j(x,z)\dfrac{\lambda_z^jt^{j-1}e^{-\lambda_zt}}{(j-1)!}.
    \end{eqnarray*}
    This implies by convexity of the square function
    \begin{eqnarray*}
        &&\dfrac{1}{3}\left(f_{T,n}^{(k)}(t,x,z)-f_T(t,x,z)\right)^2\\
        &\leq&  \left(\sum_{j=1}^kc_{n,j}^{(k)}(x,z)\dfrac{t^{j-1}}{(j-1)!}\left(\lambda_{n,z}^je^{-\lambda_{n,z}t}-\lambda_z^je^{-\lambda_zt}\right)\right)^2+  \left(\sum_{j= 1}^k\dfrac{\lambda_{z}^jt^{j-1}e^{-\lambda_{z}t}}{(j-1)!}\left(c_{n,j}^{(k)}(x,z)-c_j(x,z)\right)\right)^2\\
        &&+ \left(\sum_{j\geq k+1}c_j(x,z)\dfrac{\lambda_z^jt^{j-1}e^{-\lambda_zt}}{(j-1)!}\right)^2\\
        &=:& r_{n,1}^{(k)}(t,x,z)+r_{n,2}^{(k)}(t,x,z)+r_{n,3}^{(k)}(t,x,z).
    \end{eqnarray*}
By applying the Cauchy-Schwarz inequality, we obtain 
\begin{eqnarray*}
    r_{n,1}^{(k)}(t,x,z)&\leq& \left(\sum_{j=1}^kc_{n,j}^{(k)}(x,z)^2 \right) \left( \sum_{j=1}^k \dfrac{t^{2j-2}}{[(j-1)!]^2}\left(\lambda_{n,z}^je^{-\lambda_{n,z}t}-\lambda_z^je^{-\lambda_zt}\right)^2 \right) \\
    &\leq& \left(\sum_{j=1}^kc_{n,j}^{(k)}(x,z) \right) \left( \sum_{j=1}^k \dfrac{t^{2j-2}}{[(j-1)!]^2}\left(\lambda_{n,z}^je^{-\lambda_{n,z}t}-\lambda_z^je^{-\lambda_zt}\right)^2 \right) \\
    &\leq&  \sum_{j=1}^k \dfrac{t^{2j-2}}{[(j-1)!]^2}\left(\lambda_{n,z}^je^{-\lambda_{n,z}t}-\lambda_z^je^{-\lambda_zt}\right)^2.
\end{eqnarray*}
where the last inequality results from the condition settled on the estimator $(c_{n,j}(x,z))_{1\leq j\leq k}$. Note $q_{n,z}=\dfrac{\lambda_{n,z}}{\lambda_z+\lambda_{n,z}}$ for convenience, successive integrations of the function $t \mapsto t^\alpha e^{-t}$ for $\alpha\in\mathbb{N}$ gives
\begin{eqnarray*}
    \int_0^{+\infty} \sup_{x \in \mathcal{X}} r_{n,1}^{(k)}(t,x,z)dt&\leq&\sum_{ j =1}^k\dfrac{(2j-2)!}{[(j-1)!]^2}\left(\dfrac{\lambda_{n,z}}{2^{2j-1}}-2 \dfrac{\lambda_z^j\lambda_{n,z}^j}{(\lambda_z+\lambda_{n,z})^{2j-1}}+\dfrac{\lambda_z}{2^{2j-1}}\right)\\
    &=&2(\lambda_z+\lambda_{n,z})\sum_{ j =1}^k \binom{2(j-1)}{j-1} \left(\dfrac{1}{2^{2j}} - (q_{n,z}(1-q_{n,z}))^j \right) \\
    &=&2(\lambda_z+\lambda_{n,z})\sum_{ j =0}^{k-1} \binom{2j}{j} \left(\dfrac{1}{4^{j+1}} - (q_{n,z}(1-q_{n,z}))^{j+1} \right) \\
    &=&2(\lambda_z+\lambda_{n,z}) \left(\dfrac{1}{4} - q_{n,z}(1-q_{n,z}) \right) \sum_{ j =0}^{k-1} \binom{2j}{j} \sum_{i=0}^j \dfrac{1}{4^{i}} (q_{n,z}(1-q_{n,z}))^{j-i} \\ 
    &\leq& 2(\lambda_z+\lambda_{n,z}) \left(\dfrac{1}{4} - q_{n,z}(1-q_{n,z}) \right) \sum_{ j =0}^{k-1} \binom{2j}{j} (j+1)\dfrac{1}{4^{j}},
\end{eqnarray*}
where the last inequality makes use of the supremum of the function $p\in [0,1]\mapsto p(1-p)$ equal to 1/4. It follows that one can verify by induction that for any $j \in \mathbb{N}$, $ \displaystyle \binom{2j}{j} \dfrac{1}{4^{j}} \leq \dfrac{1}{\sqrt{j+1}} $. This shows that 
\begin{eqnarray*}
    \sum_{ j =0}^{k-1} \binom{2j}{j} (j+1)\dfrac{1}{4^{j}} \leq (k+1)^{3/2}
\end{eqnarray*}
and yields
\begin{eqnarray}
\label{proof::rn1}
    \int_0^{+\infty} \sup_{x \in \mathcal{X}} r_{n,1}^{(k)}(t,x,z)dt 
    &\leq& \left(\dfrac{1}{2} - q_{n,z}\right)^2 2(\lambda_z+\lambda_{n,z})(k+1)^{3/2}=\dfrac{\left(\lambda_{n,z}-\lambda_z\right)^2}{2(\lambda_z+\lambda_{n,z})}(k+1)^{3/2}.
\end{eqnarray}
Likewise, we consider the second remaining term with
\begin{eqnarray*}
    r_{n,2}^{(k)}(t,x,z)&\leq& \sup_{j=1,\ldots,k}\left(c_{n,j}^{(k)}(x,z)-c_j(x,z)\right)^2\left(\sum_{j= 1}^k\dfrac{\lambda_{n,z}^jt^{j-1}e^{-\lambda_{n,z}t}}{(j-1)!}\right)^2\\
\end{eqnarray*}
so that
\begin{eqnarray*}
    \nonumber\int_0^{+\infty}\sup_{x\in\mathcal{X}}r_{n,2}^{(k)}(t,x,z)dt&\leq& \sup_{x\in\mathcal{X},j=1,\ldots,k}\left(c_{n,j}^{(k)}(x,z)-c_j(x,z)\right)^2\sum_{1\leq i,j\leq k}\dfrac{\lambda_z}{2^{i+j-1}}\dfrac{(i+j-2)!}{(i-1)!(j-1)!},
\end{eqnarray*}
where
\begin{eqnarray*}    
    \sum_{1\leq i,j\leq k}\dfrac{\lambda_z}{2^{i+j-1}}\dfrac{(i+j-2)!}{(i-1)!(j-1)!}&=& \lambda_z\dfrac{1}{2} \sum_{0\leq i,j\leq k-1}\dfrac{(i+j)!}{i!j!} \frac{1}{2^{i+j}} =   \dfrac{\lambda_z}{2} \sum_{l=0}^{2(k-1)}\sum_{0\leq i,j\leq k-1,\,i+j=l} \binom{l}{i} \frac{1}{2^{l}}  \\
    &\leq& \dfrac{\lambda_z}{2} \sum_{l=0}^{2(k-1)}\sum_{i=0}^l \binom{l}{i} \frac{1}{2^{l}} =  \lambda_z\dfrac{2k-1}{2}.
\end{eqnarray*}
This leads to the following inequality
\begin{eqnarray}
\label{proof::rn2}
   \int_0^{+\infty}\sup_{x\in\mathcal{X}}r_{n,2}^{(k)}(t,x,z)dt \leq \sup_{x\in\mathcal{X},j=1,\ldots,k}\left(c_{n,j}^{(k)}(x,z)-c_j(x,z)\right)^2\lambda_z\dfrac{2k-1}{2}.
\end{eqnarray}
We next show that the last remaining term is negligible with respect to the other rates. Indeed, Lemma \ref{lem: expo_decrease} ensures that for $j$ large enough
\begin{eqnarray*}
    \sup_{x\in\mathcal{X}}c_j(x,z)\leq m_zr_z^j
\end{eqnarray*}
so that 
\begin{eqnarray*}
   \int_0^{+\infty} \sup_{x\in\mathcal{X}}r_{n,3}^{(k)}(t,x,z)dt &\leq& \lambda_zm_z^2\sum_{ i,j\geq k+1}(r_z/2)^{i+j-1}\dfrac{(i+j-2)!}{(i-1)!(j-1)!}\\
   &=& \lambda_zm_z^2\sum_{ i,j\geq k}(r_z/2)^{i+j+1}\dfrac{(i+j)!}{i!j!}\\
    &=&\lambda_zm_z^2\sum_{ l\geq 2k}(r_z/2)^{l+1}\sum_{i,j\geq k,i+j=l}\dfrac{l!}{i!(l-i)!}\\
    &\leq&\lambda_zm_z^2\sum_{ l\geq 2k}(r_z/2)^{l+1}\sum_{i=0}^l\dfrac{l!}{i!(l-i)!}\\
    &=&\dfrac{\lambda_zm_z^2}{2}\sum_{ l\geq 2k}r_z^{l+1}=\dfrac{\lambda_zm_z^2r_z^{2k+1}}{2(1-r_z)}
\end{eqnarray*}
which concludes the first part of the result together with (\ref{proof::rn1}) and (\ref{proof::rn2}). To prove the last part of the result, note that since $A_n \subset A$, for any $1\leq j\leq k$, $x\in\mathcal{X}$ and $z\in\mathcal{Z}$
    \begin{eqnarray*}
        &&c_{n,j}^{(k)}(x,z)-c_j(x,z)\\
        &=&\sum_{x_1\in\mathcal{X}}p_{n,z}(x,x_1)c_{n,{j-1}}^{(k-1)}(x_1,z)\bold{1}_{\{x\notin A_n\}}-p_z(x,x_1)c_{j-1}(x_1,z)\bold{1}_{\{x\notin A\}}\\
        &=&\bold{1}_{\{x\notin A\}}\sum_{x_1\in\mathcal{X}}p_{n,z}(x,x_1)c_{n,{j-1}}^{(k-1)}(x_1,z)-p_z(x,x_1)c_{j-1}(x_1,z)+\bold{1}_{\{x\in A \backslash A_n\}}\sum_{x_1\in\mathcal{X}}p_{n,z}(x,x_1)c_{n,{j-1}}^{(k-1)}(x_1,z).
    \end{eqnarray*}
where
    \begin{eqnarray*}
       &&\sum_{x_1\in\mathcal{X}}p_{n,z}(x,x_1)c_{n,{j-1}}^{(k-1)}(x_1,z)-p_z(x,x_1)c_{j-1}(x_1,z)\\
        &=&\sum_{x_1\in\mathcal{X}}\left(p_{n,z}(x,x_1)-p_z(x,x_1)\right)c_{j-1}(x_1,z)+\sum_{x_1\in\mathcal{X}}p_{n,z}(x,x_1)\left(c_{n,{j-1}}^{(k-1)}(x_1,z)-c_{j-1}(x_1,z)\right).
    \end{eqnarray*}
This yields
    \begin{eqnarray*}
        |c_{n,j}^{(k)}(x,z)-c_j(x,z)|&\leq&\sup_{x,x'\in\mathcal{X}}\left|p_{n,z}(x,x')-p_z(x,x')\right|\sum_{x\in\mathcal{X}}c_{j-1}(x,z)+\bold{1}_{\{x\in A \backslash A_n\}}\sum_{x_1\in\mathcal{X}}p_{n,z}(x,x_1)c_{n,{j-1}}^{(k-1)}(x_1,z)\\
        &&+\sum_{x_1\in\mathcal{X}}p_{n,z}(x,x_1)\left|c_{n,{j-1}}^{(k-1)}(x_1,z)-c_{j-1}(x_1,z)\right|.
    \end{eqnarray*}
    In particular, multiple inductions over the index $j$ imply that
    \begin{eqnarray*}
        |c_{n,j}^{(k)}(x,z)-c_j(x,z)|&\leq&\sup_{x,x'\in\mathcal{X}}\left|p_{n,z}(x,x')-p_z(x,x')\right|\sum_{l=1}^{j}\sum_{x\in\mathcal{X}}c_{j-l}(x,z)+j\sup_{y\in\mathcal{X}}\bold{1}_{\{y\in A/A_n\}}\\
        &+&\sum_{x_1,x_2,\ldots,x_j\in\mathcal{X}}p_{n,z}(x,x_1)p_{n,z}(x_1,x_2)\ldots p_{n,z}(x_{j-1},x_j)\left|c_{n,0}^{(k-j)}(x_j;z)-c_0(x_j;z)\right|.
    \end{eqnarray*}
    However, for any integers $n,k$ and $y\in\mathcal{X}$, $c_0(y;z)-c_{n,0}^{(k)}(y;z)= \mathds{1}_{\{y\in A \backslash A_n\}}$ by construction. In particular, $\sup_{y\in\mathcal{X}}\mathds{1}_{\{y\in A \backslash A_n\}}=\mathds{1}_{\{A \backslash A_n\neq\emptyset\}}$ which implies almost surely that
    \begin{eqnarray*}
        \sup_{x\in\mathcal{X},\,j=1,\ldots,k}|c_{n,j}^{(k)}(x,z)-c_j(x,z)|^2\leq 2\left(\sup_{x,x'\in\mathcal{X}}|p_{n,z}(x,x')-p_z(x,x')|\sum_{x\in\mathcal{X}}\sum_{j=0}^{k-1}c_{j}(x,z)\right)^2+2(k+1)^2\mathds{1}_{\{A \backslash A_n\neq\emptyset\}}.
    \end{eqnarray*}
There are two possible options for the set $A \backslash A_n$ to be no empty: either $A_n$ is empty or one can find $x\in A$ such that there is no terminal state $Y_{M_j}^{(j)}=x$ with $\delta_j=1$. Formally, this implies that
\begin{eqnarray*}
    \mathbb{P}(A \backslash A_n\neq\emptyset)&=&\mathbb{P}\left(\bigcup_{x\in A}\bigcap_{j=1}^n\{Y_{S_j}^{(j)}\neq x,\delta_j=1\}\right)+\mathbb{P}(A_n=\emptyset)\\
    &\leq&\sum_{x\in A}\mathbb{P}(Y_S\neq x,\delta=1)^n+\mathbb{P}(\delta=0)^n
\end{eqnarray*}
which concludes the proof.

\subsection{Proof of Lemma \ref{lem::isolated}}
The result follows from the definition of the temporal difference of the estimator $c_{n,j}^{(k)}$. Indeed, for any $x\in\mathcal{X}$ and $z\in\mathcal{Z}$, we have by direct induction when $k\geq j$
\begin{eqnarray*}
    c_{n,j}^{(k)}(x,z)&=&\sum_{x_1,x_2,\ldots,x_{j-1}\in\mathcal{X} \backslash A_n,\,x_j\in A_n}p_{n,z}(x,x_1)\ldots p_{n,z}(x_{j-1},x_{j})c_{n,0}^{(k-j)}(x_j,z)\\
    &=&\sum_{x_1,x_2,\ldots,x_{j-1}\in\mathcal{X} \backslash A_n,\,x_j\in A_n}p_{n,z}(x,x_1)\ldots p_{n,z}(x_{j-1},x_{j})\bold{1}_{x_j\in A_n}.
\end{eqnarray*}
If $x\in C_z$, then there exists no path from $x$ to $A_n$ since $A_n\subset A$. For any combination of states $(x,x_1,\ldots,x_j)$ with $x_j\in A_n$, this implies that there exist $1\leq\ell\leq j$ such that $p_z(x_\ell,x_{\ell+1})=p_{n,z}(x_\ell,x_{\ell+1})=0$ and proves the result. 

\subsection{Proof of Lemma \ref{lem::curerate}}
Similarly than the previous results, the proof depends largely on the initial position of the chain. For any $x\in A_n$, we have $x\in A$ so that the estimator construction directly returns  $R_n^{(k)}(x,z)=\mathbb{P}_z(T=+\infty|Y_0=x)=0$ for any $n\in\mathbb{N}^*$.
If $x\notin A_n$ but $x\in A$, then 
\begin{eqnarray*}
    R_n^{(k)}(x,z)-\mathbb{P}_z(T=+\infty|Y_0=x)=R_n^{(k)}(x,z)\leq \bold{1}_{\{A \backslash A_n\neq\emptyset\}}.
\end{eqnarray*}
Hence the estimator's risk is bounded above by $\mathbb{P}(x\in A \backslash A_n)$ which returns the same rate than that of the end of the proof of Proposition \ref{prop::density}. Next, if neither $x$ belongs to $A_n$ nor $A$, we have by the estimator's construction that
\begin{eqnarray*}
    R_n^{(k)}(x,z)-\mathbb{P}_z(T=+\infty|Y_0=x)&=&\sum_{0\leq j\leq k}c_j(x)-c_{n,j}^{(k)}(x)+\sum_{j>k}c_j(x)
\end{eqnarray*}
hence yielding that
\begin{eqnarray*}
    &&\mathbb{E}\left[\left(\sup_{x\in\mathcal{X}}R_n^{(k)}(x,z)-\mathbb{P}_z(T=+\infty|Y_0=x)\right)^2\right]\\
    &\leq& 2k^2 \mathbb{E}\left[\sup_{x\in\mathcal{X},\,j=1,\ldots,k}\left(c^{(k)}_{n,j}(x,z)-c_j(x,z)\right)^2\right]+2\mathbb{P}(k<S<+\infty)^2.
\end{eqnarray*}
By Lemma \ref{lem: expo_decrease}, we finally have that for $k$ large enough
\begin{eqnarray*}
    \mathbb{P}(k<S<+\infty)\leq m_z\sum_{j>k}r_z^j=m_z\dfrac{r_z^{k+1}}{1-r_z}.
\end{eqnarray*}

\subsection{Technical lemmas}

Before presenting the proofs of the main results, we introduce some technical lemmas that will be useful later on. We will consider deviations given in Lemma \ref{lem:deviationRhx} for the following process 
\begin{eqnarray*}\label{eq:processRH}
    R_{h,U}^z = \frac{1}{n} \sum_{i=1}^n \frac{K_h(z-Z_i)U_i}{\E \left[ K_h(z-Z)U\right]}
\end{eqnarray*}
where $h\in(0,1]$ and $(U_i)_{1\leq i\leq n}$ is a collection of positive \textit{i.i.d.} random variables drawn from the same distribution than that of $U$. Lemma \ref{lem:smallballHolderversion} provides concentration results for small ball probabilities with covariates satisfying the Hölder assumption $(\mathcal{H})$ and similar results with kernel expectations. Finally, Lemma \ref{lem:deviationRhx} gives upper bound probabilities for the process $R_{h,U}^z$ to be away from one. 

\begin{lem}\label{lem:smallballHolderversion}
Assume $(\mathcal{H})$. Let $h\in (0,1)$ and $z\in\text{int}(\mathcal{Z})$. Then, there exists a measurable function $z\mapsto\psi^z_n$ such that
\begin{eqnarray*}
        \P( \|z-Z\|\le h)=h^p(vf_Z(z)+\psi^z_n)
\end{eqnarray*}
where $\sup_z|\psi^z_n|=\mathcal{O}(h^\alpha)$ and $v=\int_{B(0,1)}du$ and $B(0,1)=\{u\in\mathbb{R}^p,\,\Vert u\Vert\leq 1\}$. For any $l\in\mathbb{N}$ such that there exist positive constants $m_l<M_l$ with $m_l\leq\mathbb{E}\left[U^l|Z\right]\leq M_l$ a.s., we have
    \begin{eqnarray*}
    m_l[c_Kh^{-p}]^l  \le&\dfrac{\E \left[\left(K_h\left(z-Z \right)U\right)^l \right]}{\P \left( \| z-Z \| \le h \right)}& \le M_l[C_K h^{-p}]^l 
\end{eqnarray*}
\end{lem}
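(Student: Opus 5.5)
The statement has two parts: a small-ball estimate for $\P(\|z-Z\|\le h)$, and a two-sided bound on the normalised kernel moments. I will prove them in that order.

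\emph{Small-ball estimate.} Since $z\in\text{int}(\mathcal{Z})$, for $h$ small enough the ball $B(z,h)=\{z'\,:\,\|z-z'\|\le h\}$ lies inside $\mathcal{Z}$. Substituting $z'=z+hu$ gives
\begin{eqnarray*}
\P(\|z-Z\|\le h)=\int_{B(z,h)}f_Z(z')\,dz'=h^p\int_{B(0,1)}f_Z(z+hu)\,du .
\end{eqnarray*}
I then define
\begin{eqnarray*}
\psi_n^z:=\int_{B(0,1)}\bigl(f_Z(z+hu)-f_Z(z)\bigr)\,du ,
\end{eqnarray*}
which is measurable in $z$ by Fubini, so that $\P(\|z-Z\|\le h)=h^p(vf_Z(z)+\psi_n^z)$ holds exactly. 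The Hölder condition in $(\mathcal{H})$ gives $|f_Z(z+hu)-f_Z(z)|\le C h^\alpha\|u\|^\alpha\le Ch^\alpha$ on $B(0,1)$. Hence $|\psi_n^z|\le Cv\,h^\alpha$, and this bound is uniform in $z$.

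The only delicate point is uniformity near the boundary of $\mathcal{Z}$, where $B(z,h)$ may leave $\mathcal{Z}$. There I take $f_Z=0$ outside $\mathcal{Z}$ and apply the same Hölder bound to the extended density; equivalently, I read $(\mathcal{H})$ as holding on $\mathbb{R}^p$. I expect this to be the main (mild) obstacle. If needed, I will restrict the supremum to points at distance at least $h$ from $\partial\mathcal{Z}$, which covers every fixed interior point once $n$ is large.

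\emph{Kernel moments.} The kernel $K$ is supported in the unit ball, so $K_h(z-Z)$ vanishes unless $\|z-Z\|\le h$. Conditioning on $Z$ gives
\begin{eqnarray*}
\E\left[(K_h(z-Z)U)^l\right]=\E\left[K_h(z-Z)^l\,\E[U^l|Z]\,\ind_{\{\|z-Z\|\le h\}}\right].
\end{eqnarray*}
On the event $\{\|z-Z\|\le h\}$ two bounds hold:
\begin{itemize}
\item the kernel satisfies $c_Kh^{-p}\le K_h(z-Z)\le C_Kh^{-p}$;
\item the conditional moment satisfies $m_l\le \E[U^l|Z]\le M_l$ almost surely.
\end{itemize}
Inserting both bounds, the expectation lies between $m_l[c_Kh^{-p}]^l\,\P(\|z-Z\|\le h)$ and $M_l[C_Kh^{-p}]^l\,\P(\|z-Z\|\le h)$. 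Dividing by $\P(\|z-Z\|\le h)$, which is positive for small $h$ because $f_Z(z)>0$ and $\psi_n^z=\mathcal{O}(h^\alpha)$, yields the stated two-sided inequality.
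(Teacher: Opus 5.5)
Your proof is correct and follows the paper's route for the kernel-moment bounds: condition on $Z$, restrict to $\{\Vert z-Z\Vert\le h\}$ via the support of $K$, then use $c_K\le K\le C_K$ and $m_l\le\E[U^l|Z]\le M_l$. For the small-ball expansion the paper only cites Lemma A.1 of \cite{Egea25}, and your change of variables with the explicit $\psi_n^z=\int_{B(0,1)}(f_Z(z+hu)-f_Z(z))\,du$ proves it directly.

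Your boundary caveat is a real issue, not a cosmetic one. For $Z\sim\mathcal{U}([0,1])$ and $z=h/2$ one gets $\psi_n^z=-1/2$, so the $\mathcal{O}(h^\alpha)$ bound holds uniformly only over $\{z: B(z,h)\subset\mathcal{Z}\}$, which is your second fix. Your first fix, extending $f_Z$ by zero and imposing $(\mathcal{H})$ on $\mathbb{R}^p$, is strictly stronger than the paper's hypothesis and fails in that example.
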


\begin{proof}
    Proof of the concentration for the small ball probabilities can be found in \cite{Egea25} in Lemma A.1. In order to prove the inequalities concerning the kernel expectations, it is sufficient to see that
    \begin{eqnarray*}
        \E \left[K_h\left(z-Z \right)^lU^l \right]  = h^{-pl} \E\left[ K\left(\frac{z-Z}{h} \right)^l \mathbb{E}\left[U^l|Z\right] \right].
    \end{eqnarray*}
    The function $K$ is assumed compactly supported on the unit ball, which gives us
        \begin{equation*}
            \E \left[K_h\left(z-Z \right)^l U^l\right]  = h^{-pl} \E\left[ \ind_{\{\|z-Z\|\le h\}}K\left(\frac{z-Z}{h} \right)^l \mathbb{E}\left[U^l|Z\right]\right],
        \end{equation*}
    and the first assertion follows since $K$ is bounded.
\end{proof}

\begin{lem}
\label{lem:kernellimit}
    Let $g:\mathcal{Z}\to\mathbb{R}$ be any bounded and Hölder continuous function of order $\alpha'\in(0,1]$ and constant $C_g>0$, then for any $z\in\mathcal{Z}$
    \begin{eqnarray*}
        \mathbb{E}[K_h(z-Z)g(Z)]=\int_\mathcal{Z}f_Z(u)K_h(z-u)g(u)du=f_Z(z)g(z)+O(h^{\alpha\wedge\alpha'}).
    \end{eqnarray*}
In particular, for $n$ large enough
\begin{eqnarray*}
    \dfrac{\left|\mathbb{E}[K_h(z-Z)g(Z)]-f_Z(z)g(z)\right|}{h^{\alpha\wedge\alpha'}}\leq C g(z) + C_gf_Z(z) + CC_g.
\end{eqnarray*}
\end{lem}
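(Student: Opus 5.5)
The plan is the standard approximation-of-identity argument: rescale, use the compact support and the normalisation of $K$, and control the remainder with the Hölder regularity of $f_Z$ from $(\mathcal{H})$ and of $g$.

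First, by definition of the density of $Z$, $\mathbb{E}[K_h(z-Z)g(Z)]=\int_{\mathcal{Z}}f_Z(u)K_h(z-u)g(u)\,du$. Substitute $u=z-hw$, so that $du=h^p\,dw$ and $K_h(z-u)=h^{-p}K(w)$. Since $K$ is supported in the unit ball $B(0,1)$, the integral becomes $\int_{B(0,1)}K(w)f_Z(z-hw)g(z-hw)\,dw$. Here I use that $z\in\text{int}(\mathcal{Z})$ and that $h$ is small enough (this is the meaning of ``$n$ large enough'', since $h=h_n\to0$) for $z-hB(0,1)\subset\mathcal{Z}$. Otherwise I extend $f_Z$ by zero outside $\mathcal{Z}$, with the Hölder assumption read on $\mathbb{R}^p$. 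Because $\int K=1$, this gives
\begin{eqnarray*}
\mathbb{E}[K_h(z-Z)g(Z)]-f_Z(z)g(z)=\int_{B(0,1)}K(w)\left(f_Z(z-hw)g(z-hw)-f_Z(z)g(z)\right)dw.
\end{eqnarray*}

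Second, split the integrand with the product decomposition
\begin{eqnarray*}
f_Z(z-hw)g(z-hw)-f_Z(z)g(z)=g(z)\left(f_Z(z-hw)-f_Z(z)\right)+f_Z(z)\left(g(z-hw)-g(z)\right)+\left(f_Z(z-hw)-f_Z(z)\right)\left(g(z-hw)-g(z)\right).
\end{eqnarray*}
On $B(0,1)$ we have $\Vert hw\Vert\le h$. Assumption $(\mathcal{H})$ bounds the first factor difference by $Ch^\alpha$, and the Hölder property of $g$ bounds the second by $C_gh^{\alpha'}$. The cross term is therefore at most $CC_gh^{\alpha+\alpha'}$. Since $h\le1$, each of $h^\alpha$, $h^{\alpha'}$ and $h^{\alpha+\alpha'}$ is at most $h^{\alpha\wedge\alpha'}$. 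Integrating against $K\ge0$ with $\int K=1$ then yields
\begin{eqnarray*}
\left|\mathbb{E}[K_h(z-Z)g(Z)]-f_Z(z)g(z)\right|\le h^{\alpha\wedge\alpha'}\left(C|g(z)|+C_gf_Z(z)+CC_g\right).
\end{eqnarray*}
This gives both the $O(h^{\alpha\wedge\alpha'})$ statement and the explicit constant. Note that $g(z)$ should be read as $|g(z)|$; the functions used later (such as $\lambda_z^{-1}$ and $p_z$) are nonnegative anyway.

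The only delicate point is the boundary or support issue in the change of variables: the shifted ball $z-hB(0,1)$ must stay inside $\mathcal{Z}$, where $f_Z$, $g$ and the Hölder bounds are defined. I would handle it by taking $n$ large enough that $h<\mathrm{dist}(z,\partial\mathcal{Z})$. This is also why the statement includes the qualifier ``for $n$ large enough''. Boundedness of $g$ is not even needed for the pointwise bound, only for the integral to be well defined.
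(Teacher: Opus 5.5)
Your proof is correct and follows the paper's argument: the same substitution onto $B(0,1)$ (valid once $h<\mathrm{dist}(z,\partial\mathcal{Z})$), the same use of $\int K=1$, and the same Hölder bounds. Your three-term decomposition is exactly the paper's split $g(z-hu)\bigl(f_Z(z-hu)-f_Z(z)\bigr)+f_Z(z)\bigl(g(z-hu)-g(z)\bigr)$ with $g(z-hu)$ expanded around $g(z)$, which is where the cross term $CC_gh^{\alpha+\alpha'}$ comes from. Reading $g(z)$ as $|g(z)|$ and requiring $z\in\mathrm{int}(\mathcal{Z})$ are fair clarifications of points the paper leaves implicit. Your extension-by-zero aside would need an extra assumption, since it generally breaks the Hölder bound at $\partial\mathcal{Z}$, but your main route does not rely on it.
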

\begin{proof}
The proof mostly relies on integration substitute change and the Hölder condition of the functions $f_Z$ and $g$. Indeed, for $n$ large enough
\begin{eqnarray*}
    \int_\mathcal{Z}f_Z(u)K_h(z-u)g(u)du&=&\int_{B(0,1)}K(u)f_Z(z-hu)g(z-hu)du\\
    &=&\int_{B(0,1)}K(u)\left(f_Z(z-hu)g(z-hu)-f_Z(z)g(z)\right)du + f_Z(z)g(z)
\end{eqnarray*}
where
\begin{eqnarray*}
    &&\left|\int_{B(0,1)}K(u)\left(f_Z(z-hu)g(z-hu)-f_Z(z)g(z)\right)du\right|\\
    &\leq& \int_{B(0,1)}K(u)\left|f_Z(z-hu)g(z-hu)-f_Z(z)g(z)\right|du\\
    &\leq&\int_{B(0,1)}K(u)\left(g(z-hu)|f_Z(z-hu)-f_Z(z)|+f_Z(z) |g(z)-g(z-hu)|\right)du\\
    &\leq& \int_{B(0,1)}K(u)\Vert u\Vert^{\alpha\wedge\alpha'}du \left(C g(z) h^\alpha+C_gf_Z(z) h^{\alpha'}+CC_gh^{\alpha+\alpha'}\right)\\
    &\leq&C g(z) h^\alpha+C_gf_Z(z) h^{\alpha'}+CC_gh^{\alpha+\alpha'}.
\end{eqnarray*}
\end{proof}

\bigskip

\begin{lem}\label{lem:deviationRhx}Let $h\in (0,1)$ $z\in\text{int}(\mathcal{Z})$. Assume $(\mathcal{H})$ and that for any $l\in\mathbb{N}$, there exist some positive constants $M_l$, $d$ and $D$ such that $\mathbb{E}\left[U^l|Z\right]\leq M_l$ a.s. with $M_l\leq Dl!d^l$. Also, assume that one can find a constant $M_0>0$ such that $M_0\leq\mathbb{E}\left[U|Z\right]$ almost surely. Then the following inequality holds
    \begin{equation*}
         \P\left(R^z_{h,U} \le 1/2 \right)\vee\P\left(R^z_{h,U} \ge 3/2 \right) \le  2 \exp\left(-c_Unh^p(vf_Z(z)+\psi^z_n)\right)
    \end{equation*}
where $\psi_n^z$ is defined in Lemma \ref{lem:smallballHolderversion} and $c_U:=\left(8\left(\frac{Dd^2C_K^2}{M_0^2c_K^2}+\frac{dC_K}{2M_0c_K}\right)\right)^{-1}$.
\end{lem}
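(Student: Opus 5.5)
The argument will be a Bernstein inequality for i.i.d. centered sums. The small-ball estimate of Lemma \ref{lem:smallballHolderversion} is used to relate the variance and the moments to the normalizing constant $\E[K_h(z-Z)U]$.

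We write
\[
R^z_{h,U}-1=\frac1n\sum_{i=1}^n \xi_i,\qquad \xi_i:=\frac{K_h(z-Z_i)U_i-\E[K_h(z-Z)U]}{\E[K_h(z-Z)U]} .
\]
The $\xi_i$ are i.i.d. and centered. Both events $\{R^z_{h,U}\le 1/2\}$ and $\{R^z_{h,U}\ge 3/2\}$ are contained in $\{|\sum_i\xi_i|\ge n/2\}$, so it suffices to bound $\P(\sum_i \xi_i\ge n/2)$ and $\P(-\sum_i\xi_i\ge n/2)$ by a one-sided Bernstein inequality in moment form. For each sign this gives
\[
\P\Big(\pm\sum_{i=1}^n\xi_i\ge n\varepsilon\Big)\le \exp\left(-\frac{n\varepsilon^2}{2(\sigma^2+b\varepsilon)}\right)
\quad\text{whenever}\quad \E|\xi|^l\le \tfrac{l!}{2}\sigma^2 b^{l-2}\ \text{for all } l\ge2 .
\]
We apply it with $\varepsilon=1/2$; the factor $2$ in the statement leaves room to absorb constants.

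To check the moment condition, set $\pi_h:=\P(\|z-Z\|\le h)$. The lower bound of Lemma \ref{lem:smallballHolderversion} with $l=1$ and $m_1=M_0$ gives $\E[K_h(z-Z)U]\ge M_0c_Kh^{-p}\pi_h$. The upper bound gives
\[
\E[(K_h(z-Z)U)^l]\le M_l C_K^l h^{-pl}\pi_h\le D\,l!\,d^lC_K^lh^{-pl}\pi_h .
\]
For $l\ge2$ we use $\E|X-\E X|^l\le 2^{l-1}(\E|X|^l+(\E X)^l)\le 2^l\E|X|^l$ for nonnegative $X$, and obtain
\[
\E|\xi|^l\le \frac{2^l D\,l!\,(dC_K)^l}{(M_0c_K)^l}\,\pi_h^{1-l}.
\]
Writing $\pi_h^{1-l}=\pi_h^{-1}(\pi_h^{-1})^{l-2}$, this has the Bernstein form with $\sigma^2\asymp D\,(dC_K/(M_0c_K))^2\pi_h^{-1}$ and $b\asymp (dC_K/(M_0c_K))\,\pi_h^{-1}$. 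Substituting gives an exponent of order
\[
\frac{n\pi_h}{8\big(Dd^2C_K^2/(M_0^2c_K^2)+dC_K/(2M_0c_K)\big)}=c_U\,n\pi_h .
\]
Finally, Lemma \ref{lem:smallballHolderversion} gives $\pi_h=h^p(vf_Z(z)+\psi_n^z)$, which is the claimed bound.

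The main obstacle is matching the exact constant $c_U$. The centering constants, the choice between the $2^l$ crude bound and a sharper one, and the Bernstein normalization all have to combine into $8(\cdots)$. I would first try the Bernstein variant written above with $\varepsilon=1/2$ and $\sigma^2,b$ chosen as displayed, and then adjust the centering step to lose only a factor that the definition of $c_U$ absorbs.
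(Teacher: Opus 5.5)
Your skeleton matches the paper's: Bernstein's inequality for the i.i.d.\ variables $V_i=K_h(z-Z_i)U_i/\E[K_h(z-Z)U]$, with Lemma~\ref{lem:smallballHolderversion} giving $\E[K_h(z-Z)U]\ge M_0c_Kh^{-p}\pi_h$ and $\E[V^l]\le M_lC_K^l(M_0c_K)^{-l}\pi_h^{1-l}$, where $\pi_h=\P(\|z-Z\|\le h)$. The gap is the constant. The statement fixes it explicitly, and your argument only reaches it ``up to order''.

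Put $\kappa=dC_K/(M_0c_K)$. Your centered bound is $\E|\xi|^l\le 2^lD\,l!\,\kappa^l\pi_h^{1-l}$. It forces $\sigma^2\ge 8D\kappa^2/\pi_h$ (case $l=2$) and $b\ge 2\kappa/\pi_h$ (as $l\to\infty$). So with $\varepsilon=1/2$ the best exponent it can give is $n\pi_h/\bigl(8(8D\kappa^2+\kappa)\bigr)$. This is strictly smaller than $c_Un\pi_h=n\pi_h/\bigl(8(D\kappa^2+\kappa/2)\bigr)$. Nothing in the statement absorbs the difference. The prefactor $2$ cannot compensate a smaller rate inside the exponential uniformly in $nh^p$, and $c_U$ is a fixed number with no slack. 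So the last step of your plan (``adjust the centering step'') is not an argument.

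The missing idea is to put the moment condition on the \emph{uncentered}, nonnegative $V_i$. This is the Birg\'e--Massart form of Bernstein's inequality that the paper quotes: if $\frac1n\sum_i\E|V_i|^l\le\frac{l!}{2}w^2b^{l-2}$ for all $l\ge2$, then $\P(|R^z_{h,U}-1|\ge\eta)\le2\exp\bigl(-n\eta^2/(2(w^2+b\eta))\bigr)$. No centering loss occurs, because the Chernoff step only needs
$\log\E e^{\lambda(V-\E V)}\le\E e^{\lambda V}-1-\lambda\E V=\sum_{l\ge2}\lambda^l\E V^l/l!$.
For the lower tail it needs $\E[e^{-\lambda V}-1+\lambda V]\le\lambda^2\E V^2/2$.

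Take $w^2=2D\kappa^2/\pi_h$, $b=\kappa/\pi_h$, $\eta=1/2$, and then substitute $\pi_h=h^p(vf_Z(z)+\psi^z_n)$. This gives the claimed bound with exponent $n\pi_h/\bigl(8(2D\kappa^2+\kappa/2)\bigr)$, which is exactly what the paper's proof delivers.

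Note that this still has $2D$ where the displayed $c_U$ has $D$. The case $l=2$ of the hypothesis only yields $\E V^2\le 2D\kappa^2/\pi_h$, so under the stated hypotheses $w^2$ cannot be taken smaller. The stated $c_U$ therefore appears to carry a factor-$2$ slip. The constant honestly reachable by this route is $\bigl(8(2D\kappa^2+\kappa/2)\bigr)^{-1}$, and that, rather than a tweak of the centered argument, is the right target.
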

\begin{proof}
     We prove the above inequality based on the event inclusion of the subsets $\{R^z_{h,U} \le 1/2\}$ and $\{R^z_{h,U} \ge 3/2\}$ into $\{|R^z_{h,U} - 1 | \ge 1/2\}$. The deviation will be controlled thanks to Bernstein's inequality (see \cite[Lemma 8]{Mas98}) which we state as followed. Let $(V_k)_{k=1}^n$ be independent random variables and $S_n(V) = \sum_{i=1}^n (V_i - \E [V_i])$. Then for any $b$ and $w$ positive constants with
     \begin{equation*}
         \forall l \ge 2, \quad \frac{1}{n}\sum_{i=1}^n \E [|V_i|^l] \le \frac{l!}{2}w^2 b^{l-2} , 
     \end{equation*}
     we have for $\eta>0$
     \begin{eqnarray}
     \label{eq:bernstein}
         \P \left(\frac{1}{n}|S_n(V)| \ge \eta \right) \le 2 \exp\left(-\frac{n\eta^2}{2(w^2+b\eta)}\right).
     \end{eqnarray}
     We use this result in our context by considering the random variables 
     \begin{eqnarray*}
        V_i:=\frac{K_h(z-Z_i)U_i}{\E[K_h(z-Z)U]},\quad i=1,\ldots,n 
     \end{eqnarray*}
     with $S_n(V)/n=R_{h,U}^x -1$. By Lemma \ref{lem:smallballHolderversion}, we have for $l\ge2$
    \begin{equation*}
         \E \left[|V_i|^l \right]= \frac{ \E \left[K_h^l (z - Z)U^l\right]}{\E \left[K_h(z-Z)U\right]^l}\le \frac{M_lC_K^l}{M_0^lc_K^l \P( \|z-Z\|\le h)^{l-1}},
     \end{equation*}
     which implies that
    \begin{equation*}
         \frac{1}{n}\sum_{i=1}^n \E [|V_i|^l] \le  \dfrac{l!}{2}\underbrace{\frac{2Dd^2C_K^2}{M_0^2c_K^2 \P( \|x-X\|\le h)}}_{\eqqcolon w^2}  \underbrace{\left( \frac{dC_K}{M_0c_K \P( \|x-X\|\le h)}\right)^{l-2}}_{\eqqcolon b^{l-2}},
     \end{equation*}
     The result finally follows from (\ref{eq:bernstein}) with $\eta=1/2$ and applying Lemma \ref{lem:smallballHolderversion}.
\end{proof}
\bigskip

\subsection{Proof of Theorem \ref{theorem::kernelmethod}}

The main idea of the proof consists on derivation of the rates of convergence for the estimators $\lambda_{n,z}$ and $(c_{n,j}^{(k)}(x))_{j\geq 0}$ in order to apply Proposition \ref{prop::density}.

\paragraph{Transition rate estimator.} 
We denote 
\begin{eqnarray*}
    U_j=\dfrac{E^{(j)}_{M_j}}{M_j}=\dfrac{\sum_{i=1}^{M_j}E_i^{(j)}-E_{i-1}^{(j)}}{M_j},\quad j=1,\ldots,n.
\end{eqnarray*}
As mentioned in the estimator presentation, we limit large outcomes when they exceed a given threshold. In particular, this allows to restrict the analysis on the set $\{R_{h,U}^z> 1/2\}$ since
\begin{eqnarray}
\label{proof::transitionratemaj}
    \nonumber&&\hspace{-1cm}\mathbb{E}\left[(\lambda_{n,z}-\lambda_z)^2\right]\\
    \nonumber&\leq&\mathbb{E}\left[(\lambda_{n,z}-\lambda_z)^2\mathds{1}_{\{R_{h,U}^z> 1/2,R_{h,1}^z< 3/2\}}\right]+(\tilde{\lambda}-\lambda_{\min})^2\mathbb{P}(\{R_{h,U}^z\leq 1/2\}\cup\{R_{h,1}^z\geq 3/2\})\\
    \nonumber &\leq&\mathbb{E}\left[\left(\left(\sum_{j=1}^n\omega_{h,z}(Z_j)U_j\right)^{-1}-\lambda_z\right)^2\mathds{1}_{\{R_{h,U}^z> 1/2,R_{h,1}^z< 3/2\}}\right]\\
    &&+(\tilde{\lambda}-\lambda_{\min})^2(\mathbb{P}(R_{h,U}^z\leq 1/2)+\mathbb{P}(R_{h,1}^z\geq 3/2)).
\end{eqnarray}
We can use Lemma \ref{lem:deviationRhx} to show that the probability that $R_{h,U}^z \leq 1/2$ rapidly decreases when $n$ is large enough. To do so, we check that the distribution of the $U_j$'s fits the conditions of the latter result. Indeed, we have for any $l\in\mathbb{N}$ and $z\in\mathcal{Z}$
\begin{eqnarray*}
    \mathbb{E}_z\left[U^l\right]=\mathbb{E}_z\left[\dfrac{\left(\sum_{i=1}^{M}E_i-E_{i-1}\right)^l}{M^l}\right]
\end{eqnarray*}
where given $M=m\in\mathbb{N}^*$ and $Z=z$, the random sum $\sum_{i=1}^{M}E_i-E_{i-1}$ follows an Erlang distribution with parameters $M$ and $\lambda_z$. This implies that
\begin{eqnarray*}
    \mathbb{E}_z\left[\left.\left(\sum_{i=1}^{M}E_i-E_{i-1}\right)^l\right|M=m\right]&=&\dfrac{1}{(m-1)!}\int_0^{+\infty}\lambda_z^{m}u^{l+m-1}e^{-\lambda_zu}du =\dfrac{1}{\lambda_z^l(m-1)!}\int_0^{+\infty}u^{l+m-1}e^{-u}du\\
    &=&\dfrac{(l+m-1)!}{\lambda_z^l(m-1)!} =l!\binom{l+m-1}{m-1}\lambda_z^{-l}.
\end{eqnarray*}
Note that for any $m\in\mathbb{N}^*$, we have
\begin{eqnarray*}
    \mathbb{P}_z(M=m)&=&\mathbb{P}_z(S=m,S\leq L)+\mathbb{P}_z(L=m,L<S)\\
    &\leq&\mathbb{P}_z(m\leq L)+\mathbb{P}_z(L=m)\\
    &\leq&2\mathbb{P}_z(m\leq L).
\end{eqnarray*}
Consequently, one can easily verify that the conditions of Lemma \ref{lem:deviationRhx} are verified with $M_0=\lambda_{\max}^{-1}$, $M_1 = \lambda_{\min}^{-1}$ and for $l\geq 2$
\begin{eqnarray*}
\label{proof:Mcondition}
    E_z[U^l]&\leq& 2l!\sum_{m\geq1}\binom{l+m-1}{m-1}\dfrac{\mathbb{P}_z(m\leq L)}{(m\lambda_z)^{l}}\\
    &\leq&2l!\sup_{z\in\mathcal{Z}}\mathbb{E}_z[L]\lambda_{\min}^{-l}:=M_l.
\end{eqnarray*}
Focusing on the main estimator's part, we obtain the following decomposition
\begin{eqnarray*}
  \left(\sum_{j=1}^n\omega_{h,z}(Z_j)U_j\right)^{-1}  &=&\dfrac{1}{n}\sum_{j=1}^nK_h(z-Z_j)\left(\dfrac{1}{n}\sum_{j=1}^nK_h(z-Z_j)U_j\right)^{-1}\\
    &=&R_{h,1}^z\left(\dfrac{\mathbb{E}[K_h(z-Z)]}{\mathbb{E}[K_h(z-Z)U]}-\mathbb{E}[K_h(z-Z)]\dfrac{\dfrac{1}{n}\sum_{j=1}^nK_h(z-Z_j)U_j-\mathbb{E}[K_h(z-Z)U]}{R_{h,U}^z\mathbb{E}[K_h(z-Z)U]^2}\right)
\end{eqnarray*}
which implies that
\begin{eqnarray*}
    &&\left(\sum_{j=1}^n\omega_{h,z}(Z_j)U_j\right)^{-1}  -\lambda_z\\
     &=& \left( \dfrac{\mathbb{E}[K_h(z-Z)]}{\mathbb{E}[K_h(z-Z)U]}-\lambda_z \right) + \left( \dfrac{\mathbb{E}[K_h(z-Z)]}{\mathbb{E}[K_h(z-Z)U]}(R_{h,1}^z-1) \right)\\
     &&-R_{h,1}^z\mathbb{E}[K_h(z-Z)]\dfrac{\dfrac{1}{n}\sum_{j=1}^nK_h(z-Z_j)U_j-\mathbb{E}[K_h(z-Z)U]}{R_{h,U}^z\mathbb{E}[K_h(z-Z)U]^2}\\
     &:=&R_{n,1}+R_{n,2}+R_{n,3}.
\end{eqnarray*}
Due to the convexity of the square function, we can see that we can obtain an upper bound of the quadratic error based on the second-order moment of each of the remaining terms independently. We begin by examining the conditional expectation of the transition rate estimator. This is the primary source of bias in the discrepancy between the estimator and the true value. We thus have 
\begin{eqnarray*}
    \mathbb{E}[K_h(z-Z)U]=\int_\mathcal{Z}f_Z(u)K_h(z-u)\mathbb{E}_u[U]du,
\end{eqnarray*}
where for any $u\in\mathcal{Z}$
\begin{eqnarray*}
    \mathbb{E}_u[U]=\mathbb{E}_u\left[\dfrac{1}{M}\mathbb{E}_u\left[\left.\sum_{i=1}^ME_i-E_{i-1}\right|M\right]\right]=\lambda_u^{-1} ,
\end{eqnarray*}
since the random series $(E_i-E_{i-1})_{i\geq 1}$ forms an \textit{i.i.d.} sample of exponential random variables with parameter $\lambda_u$ independent from $S$ and $L$. By use of Lemma \ref{lem:kernellimit}, we thus have for $n$ large enough
\begin{eqnarray*}
    \left|R_{n,1}\right|&=&\left|\dfrac{\mathbb{E}[K_h(z-Z)]-f_Z(z)+f_Z(z)}{\mathbb{E}[K_h(z-Z)U]}-\lambda_z\right|\\
    &\leq&\dfrac{\left|\mathbb{E}[K_h(z-Z)]-f_Z(z)\right|+\lambda_z\left|\mathbb{E}[K_h(z-Z)U]-f_Z(z)\lambda_z^{-1}\right|}{\mathbb{E}[K_h(z-Z)U]}\\
    &\leq&C\left(1+C+f_Z(z)\left(1+\dfrac{\lambda_z}{\lambda_{\min}^2}\right)\right)\dfrac{h^\alpha}{\mathbb{E}[K_h(z-Z)U]}
    \end{eqnarray*}
since the function $z\mapsto \lambda_z^{-1}$ is Hölder continuous of order $\alpha$ and constant $\frac{C}{\lambda_{\min}^2}$. The control of the first term is then given by
\begin{eqnarray}
\label{proof::transitionrn1}
    \nonumber|R_{n,1}|&\leq& C\left(1+C+f_Z(z)\left(1+\dfrac{\lambda_z}{\lambda_{\min}^2}\right)\right)h^\alpha\left(\dfrac{1}{f_Z(z)\lambda_z^{-1}}+\dfrac{|f_Z(z)\lambda_z^{-1}-\mathbb{E}[K_h(z-Z)U]|}{f_Z(z)\lambda_z^{-1}\mathbb{E}[K_h(z-Z)U]}\right)\\
    \nonumber&\leq& \dfrac{2C}{f_Z(z)\lambda_z^{-1}}\left(1+C+f_Z(z)\left(1+\dfrac{\lambda_z}{\lambda_{\min}^2}\right)\right)h^\alpha\\
    &\leq& \dfrac{2C\lambda_{\max}}{f_Z(z)}\left(1+C+f_Z(z)\left(1+\dfrac{\lambda_{\max}}{\lambda_{\min}^2}\right)\right)h^\alpha
\end{eqnarray}
We move on to the second term and observe that $R_{n,2}=(R_{n,1}+\lambda_z)(R_{h,1}^z-1)$, which implies by (\ref{proof::transitionrn1}) and for $n$ large enough that
\begin{eqnarray*}
    (R_{n,2})^2&\leq& \left[\dfrac{2C\lambda_{\max}}{f_Z(z)}\left(1+C+f_Z(z)\left(1+\dfrac{\lambda_z}{\lambda_{\min}^2}\right)\right)h^\alpha+\lambda_z\right]^2\left(R_{h,1}^z-1\right)^2\\
    &\leq&(1+\lambda_{\max})^2\left(R_{h,1}^z-1\right)^2.
\end{eqnarray*}
The upper bound of $(R_{n,2})^2$ then relies on $\mathbb{E}\left[(R_{h,1}^z-1)^2\right]\leq\dfrac{\mathbb{E}[K_h(z-Z)^2]}{n\mathbb{E}[K_h(z-Z)]^2}$. By of use of Lemma \ref{lem:smallballHolderversion}, we obtain that
\begin{eqnarray*}
    \dfrac{\mathbb{E}[K_h(z-Z)^2]}{\mathbb{E}[K_h(z-Z)]^2}\leq \dfrac{h^{-p}}{vf_Z(z)+\psi_n^z}\dfrac{C_K^2}{c_K^2} ,
\end{eqnarray*}
which allows to have the second inequality
\begin{eqnarray}
    \label{proof::transitionrn2}
\mathbb{E}[(R_{n,2})^2]\leq (nh^p)^{-1}\dfrac{C_K^2(1+\lambda_{\max})^2}{c_K^2(vf_Z(z)+\psi_n^z)}.
\end{eqnarray}
We finally obtain the upper bound of the last term using the same analysis as the previous one, except that we must control the random processes $R_{h,1}^z$ and $R_{h,U}^z$. It appears that
\begin{eqnarray}
\label{proof::transitionrn3}
    &&\nonumber\hspace{-1cm}\mathbb{E}\left[(R_{n,3})^2\mathds{1}_{\{R_{h,U}^z> 1/2,R_{h,1}^z<3/2\}}\right]\\
    \nonumber&\leq&\dfrac{9}{4\lambda_{\min}^4(vf_Z(z)+\psi_n^z)^2}\mathbb{E}\left[\left(\dfrac{\dfrac{1}{n}\sum_{j=1}^nK_h(z-Z_j)U_j-\mathbb{E}[K_h(z-Z)U]}{R_{h,U}^z}\right)^2\mathds{1}_{\{R_{h,U}^z> 1/2\}}\right]\\
    \nonumber&\leq &\dfrac{9}{n\lambda_{\min}^4(vf_Z(z)+\psi_n^z)^2}\mathbb{E}\left[(K_h(z-Z)U)^2\right]\\
    &\leq& (nh^p)^{-1}\dfrac{36\sup_{z\in\mathcal{Z}}\mathbb{E}_z[L]}{\lambda_{\min}^6(vf_Z(z)+\psi_n^z)}.
\end{eqnarray}

\bigskip
Back to the initial upper bound, combining  (\ref{proof::transitionrn1}) with (\ref{proof::transitionrn2}) and (\ref{proof::transitionrn3}) in (\ref{proof::transitionratemaj}), we finally obtain that
\begin{eqnarray}
\label{proof:transitionfinalrate}
    \mathbb{E}\left[(\lambda_{n,z}-\lambda_z)^2\right]\leq a_1^zh^{2\alpha}+a_2^z(nh^p)^{-1}+2(\tilde{\lambda}-\lambda_{\min})^2\exp\left(-\dfrac{nh^p(vf_Z(z)+\psi_n^z)}{8\left(\frac{2\sup_{z\in\mathcal{Z}}\mathbb{E}_z[L]\lambda_{\max}^2C_K^2}{\lambda_{\min}^2c_K^2}+\frac{\lambda_{\max}C_K}{2\lambda_{\min}c_K}\right)}\right) ,
\end{eqnarray}
where 
\begin{eqnarray*}
    \left\{\begin{matrix}
        a_1^z&=&\dfrac{12C^2\lambda_{\max}^2}{f_Z(z)^2}\left(1+C+f_Z(z)\left(1+\dfrac{\lambda_{\max}}{\lambda_{\min}^2}\right)\right)^2,\\[.5cm]
        a_2^z&=&\dfrac{1}{vf_Z(z)+\psi_n^z}\left(\dfrac{C_K^2(1+\lambda_{\max})^2}{c_K^2}+\dfrac{36\sup_{z\in\mathcal{Z}}\mathbb{E}_z[L]}{\lambda_{\min}^6}\right).
    \end{matrix}
    \right.
\end{eqnarray*}

\bigskip
\paragraph{Hitting-time coefficients.}
Based on the bias-variance decomposition of $p_{n,z}(x,x')-p_z(x,x')$, we can rewrite the latter as follows
\begin{eqnarray*}
    &&(R_{h,N^x}^z)^{-1}\left(\dfrac{ \frac{1}{n}\sum_{j=1}^nK_h(z-Z_j)N_j^{x,x'}-\mathbb{E}[K_h(z-Z)N^{x,x'}]}{\mathbb{E}[K_h(z-Z)N^x]}+(1-R_{h,N^x}^z)p_z(x,x')\right.\\
    &&\left.+\dfrac{\mathbb{E}[K_h(z-Z)N^{x,x'}]}{\mathbb{E}[K_h(z-Z)N^x]}-p_z(x,x')\right)\\
    &:=&S_{n,1}(x,x')+S_{n,2}(x,x')+S_{n,3}(x,x').
\end{eqnarray*}
The same strategy used for the transition estimator applies to considering the weight terms. On the set $\{R_{h,N^x}^z> 1/2\}$, we have for any $x,x'\in\mathcal{X}$
\begin{eqnarray*}
    \mathbb{E}\left[\left(S_{n,1}(x,x')\right)^2\mathds{1}_{\{R_{h,N^x}^z> 1/2\}}\right]&\leq& 4\dfrac{\mathbb{E}\left[(K_h(z-Z)N^{x,x'})^2\right]}{n\mathbb{E}[K_h(z-Z)N^x]^2}\\
    &\leq& 4\dfrac{\mathbb{E}\left[(K_h(z-Z)N^{x})^2\right]}{n\mathbb{E}[K_h(z-Z)N^x]^2}.
\end{eqnarray*}
By definition, we almost surely have $N^x \leq L$ which ensures that $\sup_z \mathbb{E} [(N^x)^2] < + \infty$. Furthermore,  the condition with regards to $\inf_z \mathbb{E}_z[N^x]$ and Lemma \ref{lem:smallballHolderversion} implies that 
\begin{eqnarray*}
    \dfrac{\mathbb{E}\left[(K_h(z-Z)N^{x})^2\right]}{\mathbb{E}[K_h(z-Z)N^x]^2} &\leq& h^{-p}\dfrac{\sup_{z \in \mathcal{Z}} \mathbb{E}\left[L^2\right] C_K^2}{a^2c_K^2(vf_Z(z)+\psi_n^z)}.
\end{eqnarray*} 
It next follows that the supremum of the remaining term is bounded above by
\begin{eqnarray*}
    \mathbb{E}[\sup_{x,x'\in\mathcal{X}}\left(S_{n,1}(x,x')\right)^2\mathds{1}_{\{R_{h,N^x}^z> 1/2\}}]\leq \sum_{x,x\in\mathcal{X}}\mathbb{E}[\left(S_{n,1}(x,x')\right)^2\mathds{1}_{\{R_{h,N^x}^z> 1/2\}}]\leq 4| \mathcal{X} |^2  \dfrac{\mathbb{E}\left[(K_h(z-Z)N^{x})^2\right]}{n\mathbb{E}[K_h(z-Z)N^x]^2} ,
\end{eqnarray*}
which yields
\begin{eqnarray}
\label{proof::coefsn1}
    \mathbb{E}[\sup_{x,x'\in\mathcal{X}}\left(S_{n,1}(x,x')\right)^2\mathds{1}_{\{R_{h,N^x}^z> 1/2\}}]\leq 4| \mathcal{X} |^2 (nh^p)^{-1}\dfrac{ \sup_{z \in \mathcal{Z}} \mathbb{E}\left[L^2\right]C_K^2}{a^2c_K^2(vf_Z(z)+\psi_n^z)}.
\end{eqnarray}
Next, we observe that for any $x,x'\in\mathcal{X}$
\begin{eqnarray*}
    \mathbb{E}\left[\left(S_{n,2}(x,x')\right)^2\right]\leq\mathbb{E}\left[(R_{h,N^x}^z-1)^2\right]&\leq&\dfrac{\mathbb{E}[(K_h(z-Z)N^x)^2]}{n\mathbb{E}[K_h(z-Z)N^x]^2}\\
    &\leq&(nh^p)^{-1}\dfrac{ \sup_{z \in \mathcal{Z}} \mathbb{E}\left[L^2\right] C_K^2}{a^2c_K^2(vf_Z(z)+\psi_n^z)}.
\end{eqnarray*}
In a similar manner than $S_{n,1}$, this allows to control the second remaining term with
\begin{eqnarray}
\label{proof::coefsn2}
    \mathbb{E}\left[\sup_{x,x'\in\mathcal{X}}S_{n,2}^2(x,x')\right] \leq | \mathcal{X} | (nh^p)^{-1}\dfrac{ \sup_{z \in \mathcal{Z}} \mathbb{E}\left[L^2\right] C_K^2}{a^2c_K^2(vf_Z(z)+\psi_n^z)}.
\end{eqnarray}

Finally, we consider the last remaining term with
\begin{eqnarray*}
    |S_{n,3}(x,x')|&=&\left|\dfrac{\mathbb{E}[K_h(z-Z)N^xp_Z(x,x')]}{\mathbb{E}[K_h(z-Z)N^x]}-p_z(x,x')\right|\\
    &\leq&\dfrac{\mathbb{E}[K_h(z-Z)N^x|p_Z(x,x')-p_z(x,x')|]}{\mathbb{E}[K_h(z-Z)N^x]}
\end{eqnarray*}
For $n$ large enough such that $z-hu\in int(\mathcal{Z})$ for any $u\in B(0,1)$, we have
\begin{eqnarray*}
    \mathbb{E}[K_h(z-Z)N^x|p_Z(x,x')-p_z(x,x')|]&=&\int_\mathcal{Z}K_h(z-u)\mathbb{E}_u[N^x]|p_u(x,x')-p_z(x,x')|f_Z(u)du\\
    &=&\int_{B(0,1)}K(u)\mathbb{E}_{z-hu}[N^x]|p_{z-hu}(x,x')-p_z(x,x')|f_Z(z-hu)du\\
    &\leq& Ch^\alpha\int_{B(0,1)}K(u)\mathbb{E}_{z-hu}[N^x]\Vert u \Vert^\alpha f_Z(z-hu)du\\
    &\leq& Ch^\alpha\mathbb{E}[K_h(z-Z)N^x]
\end{eqnarray*}
which implies that
\begin{eqnarray}
\label{proof::coefsn3}
    \sup_{x,x'\in\mathcal{X}}\left(S_{n,3}(x,x')\right)^2\leq C^2h^{2\alpha}.
\end{eqnarray}
Because $|p_{n,z}(x,x')-p_z(x,x')|$ is uniformly smaller than 1, it only remains to show that $\mathbb{E}[\sup_{x\in\mathcal{X}}\mathds{1}_{\{R^z_{h,N^x}\leq 1/2\}}]$ is sufficiently small when $n$ grows. In our case, the number of states is finite, so that
\begin{eqnarray*}
    \mathbb{E}[\sup_{x\in\mathcal{X}}\mathds{1}_{\{R^z_{h,N^x}\leq 1/2\}}]\leq|\mathcal{X}|\sup_{x\in\mathcal{X}}\mathbb{P}\left(R^z_{h,N^x}\leq \dfrac{1}{2}\right).
\end{eqnarray*}
It turns out that we can apply Lemma \ref{lem:deviationRhx} with $U=N^x$ since $N^x \leq  L$ and $\inf_{x\in\mathcal{X},z\in\mathcal{Z}}\mathbb{E}_z[N^x]>a$. This, combined with (\ref{proof::coefsn1}), (\ref{proof::coefsn2}) and (\ref{proof::coefsn3}), proves that
\begin{eqnarray} \label{eq::14} \nonumber
    \mathbb{E}\left[\sup_{x,x'\in\mathcal{X}}(p_{n,z}(x,x')-p_z(x,x'))^2\right]&\leq& 2C^2h^{2\alpha}+ \dfrac{( 4|\mathcal{X} |^2 + |\mathcal{X} |)\sup_{z \in \mathcal{Z}} \mathbb{E}\left[L^2\right]C_K^2}{a^2c_K^2(vf_Z(z)+\psi_n^z)}(nh^p)^{-1}\\
    &&+|\mathcal{X}|\exp\left(-\dfrac{nh^p(vf_Z(z)+\psi_n^z)}{8\left(\frac{D_L C_K^2}{a^2c_K^2}+\frac{C_K}{2ac_K}\right)}\right) .
\end{eqnarray}
The proof follows from Proposition \ref{prop::density} and Inequalities (\ref{theo::temporalinequality}), (\ref{proof:transitionfinalrate}) and (\ref{eq::14}).
\end{document}